\documentclass[reqno,12pt,letterpaper]{amsart}
\usepackage{amsmath,amssymb,amsthm,graphicx,mathrsfs,url}
\usepackage[usenames,dvipsnames]{color}
\usepackage[colorlinks=true,linkcolor=Red,citecolor=Green]{hyperref}

\def\?[#1]{\textbf{[#1]}\marginpar{\Large{\textbf{??}}}}
\setlength{\textheight}{8in} \setlength{\oddsidemargin}{0.0in}
\setlength{\evensidemargin}{0.0in} \setlength{\textwidth}{6.4in}
\setlength{\topmargin}{0.18in} \setlength{\headheight}{0.18in}
\setlength{\marginparwidth}{1.0in}
\setlength{\abovedisplayskip}{0.2in}
\setlength{\belowdisplayskip}{0.2in}
\setlength{\parskip}{0.05in}

\newtheorem{prop}{Proposition}
\newtheorem{thm}[prop]{Theorem}

\newtheorem{conj}[prop]{Conjecture}
\newtheorem{rem}[prop]{Remark}

\numberwithin{equation}{section}
\numberwithin{prop}{section}

\renewcommand{\Re}{\mathop{\rm Re}\nolimits}
\renewcommand{\Im}{\mathop{\rm Im}\nolimits}

\DeclareMathOperator{\Press}{Pr}

\DeclareMathOperator{\supp}{supp}
\DeclareMathOperator{\tr}{tr}
\DeclareMathOperator{\Sp}{Sp}
\DeclareMathOperator{\Spec}{Spec}
\DeclareMathOperator{\Res}{Res}

\DeclareMathOperator{\PSL}{PSL}
\DeclareMathOperator{\GL}{GL}
\DeclareMathOperator{\Vol}{Vol}
\DeclareMathOperator{\KS}{KS}

\begin{document}
\title[Sublinear lower bounds]{Sublinear lower bounds of eigenvalues for twisted Laplacian on compact hyperbolic surfaces}
\author{Yulin Gong}
\email{gongyl22@mails.tsinghua.edu.cn}
\address{Department of Mathematical Sciences, Tsinghua University, Beijing 100084, China}
\author{Long Jin}
\email{jinlong@mail.tsinghua.edu.cn}
\address{Mathematical Sciences Center, Tsinghua University, Beijing, China \& Beijing Institute of Mathematical Sciences and Applications, Beijing, China}

\begin{abstract} 
We investigate the asymptotic spectral distribution of the twisted Laplacian associated with a real harmonic 1-form on a compact hyperbolic surface. In particular, we establish a sublinear lower bound on the number of eigenvalues in a sufficiently large strip determined by the pressure of the harmonic 1-form. Furthermore, following an observation by Anantharaman \cite{nalinideviation}, we show that quantum unique ergodicity fails to hold for certain twisted Laplacians.
\end{abstract}

\maketitle

\section{Introduction}
\label{s:intro}
Let $X=\mathbb{H}/\Gamma$ be a compact hyperbolic surface without boundary, $\Gamma$ a cocompact Fuchsian subgroup of $\PSL(2,\mathbb{R})$. We study the distribution for eigenvalues of the twisted Laplacian operators $\Delta_{\omega}$ on $X$ by a harmonic $1$-form $\omega \in \mathcal{H}^{1}(X,\mathbb{C})$, defined as follows:
\begin{equation}
\label{e:twisted-laplacian}
    \Delta_{\omega}f(x):=\Delta f(x)-2\langle \omega,df \rangle_{x}+|\omega|_{x}^{2}f(x),\quad\quad f=f(x)\in C^\infty(X).
\end{equation}
Here $\Delta$ is the usual Laplacian--Beltrami operator on $M$, $\langle\bullet,\bullet\rangle$ is the $\mathbb{C}$-bilinear form on $T_x^\ast X\otimes\mathbb{C}$ extending the Riemannian metric on $T_x^\ast X$, and $|\omega|_{x}^{2}=\langle \omega,\omega \rangle_{x}$. 

When $\omega\in\mathcal{H}^1(X,i\mathbb{R})$, the operator $\Delta_\omega$ is self-adjoint and related to the distribution of geodesics in a given homology class, see Phillips--Sarnak \cite{pshomology}, Katsuda--Sunada \cite{homology}. In this paper, we consider the situation that $\omega \in \mathcal{H}^{1}(X,\mathbb{R})$, that is, a real-valued harmonic $1$-form. Then $\Delta_{\omega}$ is a non-self-adjoint operator on $L^2(X)$ with discrete spectrum: 
\begin{equation}
\label{e:eigenvalues}
\Delta_{\omega}\phi_j+\lambda_j\phi_j=0, \quad 
\|\phi_j||_{L^2}=1 \quad\text{with}\quad
\lambda_0<\Re \lambda_1 \leq \Re \lambda_2\leq  \nearrow \infty.
\end{equation}
Anantharaman \cite{naliniearly} applies the twisted heat semi-groups \( \left\{e^{\frac{t\Delta_{\omega}}{2}}\right\}_{t\geq 0} \) to study the distribution of closed geodesics which are optimal in homology.

We use the spectral parameter $r_j\in\Sp(\omega)=\Sp(X,\omega)$ with $\Im r_j\geq0$ which is related to $\lambda_j\in\Spec(-\Delta_\omega)$ by the relation
$$\lambda_j=\frac{1}{4}+r_j^2,\quad j\in\mathbb{N}.$$

Our main theorem shows a sublinear lower bound of the spectral distribution away from the real axis for the twisted Laplacian on a compact hyperbolic surface. 
Let us define the following counting function for the eigenvalues of twisted Laplacian $\Delta_\omega$:
\begin{equation}
\label{e:counting-ar}
N_A(R):=\#\{r\in\Sp(\omega):|\Re r|\leq R, \ \Im r\geq A\},\quad A, \ R\geq 0.
\end{equation}

\begin{thm}
\label{t:sublinear}
If $\beta\in(0,1)$ and
\begin{equation}
\label{e:range-A}
0<A<\|\omega\|_s-\frac{1}{2}-\frac{\Press(\omega)-\|\omega\|_s}{1-\beta}
\end{equation}
there exist constants $C$ and $R_0>0$, depending on $\beta$ and $A$ such that for any $R\geq R_0$, we have:
\begin{equation}
\label{e:sublinear}
N_A(R)\geq\frac{1}{C}R^{\beta}.
\end{equation}
\end{thm}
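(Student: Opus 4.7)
I would adapt the Jakobson--Naud strategy for sublinear lower bounds on resonance counts by introducing the twisted Selberg zeta function
$$Z_\omega(s)\ =\ \prod_{\gamma}\prod_{k=0}^{\infty}\Bigl(1 - e^{\int_\gamma \omega}\,e^{-(s+k)\ell(\gamma)}\Bigr),$$
with the outer product over primitive oriented closed geodesics $\gamma$ on $X$. A twisted Selberg trace formula, or Fried's determinantal identity with a transfer operator on $SX$, should yield the meromorphic continuation of $Z_\omega$ to $\mathbb{C}$ as an entire function of order at most $2$ whose nontrivial zeros are precisely the points $\tfrac12 \pm ir_j$ with $r_j \in \Sp(\omega)$. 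Under this identification $N_A(R)$ is (up to bounded multiplicity) the number of zeros of $Z_\omega$ inside a rectangle $\mathcal{R}_R$ of vertical extent $R$ and horizontal extent $\|\omega\|_s - \tfrac12 - A$ sitting inside the critical strip.

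Next, I would derive asymmetric growth estimates on $\log|Z_\omega|$ along vertical lines, using two dynamical inputs. On the ``convergent side'' at $\Re s > \tfrac12 + \Press(\omega)$, the Parry--Pollicott prime orbit theorem for the H\"older potential $\omega(V)$ on $SX$ (with $V$ the geodesic vector field) gives absolute convergence of the Euler product and $\log|Z_\omega(s)| = O(1)$ uniformly in $\Im s$. On the ``divergent side'' at $\Re s = \tfrac12 - \|\omega\|_s$, the identity-term contribution in the trace formula together with a functional equation of the form $Z_\omega(s) = F(s) Z_{-\omega}(1-s)$ shows that $\log|Z_\omega(\tfrac12 - \|\omega\|_s + i\tau)|$ grows linearly in $|\tau|$. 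A Phragm\'en--Lindel\"of three-lines interpolation between these regimes then controls $\log|Z_\omega|$ on the intermediate line $\Re s = \tfrac12 + A$, and the loss factor $1/(1-\beta)$ in \eqref{e:range-A} arises as the interpolation exponent, reflecting the ``pressure gap'' $\|\omega\|_s - \Press(\omega)$.

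Finally, Littlewood's lemma applied to $Z_\omega$ on $\mathcal{R}_R$ relates $\sum_{z \in \mathcal{R}_R}(\sigma_{\rm right} - \Re z)$ to the difference of the boundary integrals $\int_0^R \log|Z_\omega|$ on the left and right vertical sides of $\mathcal{R}_R$. The right-side integral is $O(R)$ from the $O(1)$ bound on the convergent side, while the left-side integral is bounded below by $c R^{\beta}$ via the Phragm\'en--Lindel\"of estimate of the previous step. Since each summand on the left is bounded above by $\|\omega\|_s - \tfrac12 - A$, we deduce $N_A(R) \geq R^{\beta}/C$, which is \eqref{e:sublinear}.

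The main technical hurdle is the sharpness of the Phragm\'en--Lindel\"of \emph{lower} bound on the mean of $\log|Z_\omega|$ at an intermediate vertical line, despite the presence of nearby zeros of $Z_\omega$; tracking constants so that the precise range \eqref{e:range-A} is achieved requires a careful interplay between the pressure gap $\|\omega\|_s - \Press(\omega)$, the H\"older regularity of $\omega(V)$, and the target exponent $\beta$. A secondary subtlety is identifying the correct functional equation for $Z_\omega$, since real-valued $\omega$ produces a non-self-adjoint twisted Laplacian and the dual function $Z_{-\omega}$ naturally enters in place of $Z_\omega$ itself.
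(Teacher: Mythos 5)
Your plan takes a genuinely different route from the paper: it proposes to work with the twisted Selberg zeta function $Z_\omega$, combine asymmetric growth estimates on vertical lines via Phragm\'en--Lindel\"of interpolation, and then apply Littlewood's lemma to count zeros. The paper instead works directly with the twisted Selberg trace formula and a compactly supported bump test function $g=\varphi_{\varepsilon,d}+\varphi_{\varepsilon,d}(-\cdot)$ concentrated near $d=k\ell_{\gamma_0}$ for a fixed closed geodesic $\gamma_0$; it keeps only the single term $\gamma=k\gamma_0$ on the geometric side, controls the identity and low-lying spectral terms crudely, and optimizes over the scale $\varepsilon$ to force $N_A(R)\gg R^\beta$. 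The two frameworks are morally dual (Littlewood's lemma for $Z_\omega$ is essentially the trace formula integrated in the spectral parameter), so the comparison is worth making, but your version contains a step that does not go through.

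The gap is the source of your lower bound. To make Littlewood's lemma produce $N_A(R)\gg R^\beta$ you need a \emph{lower} bound of size $\gtrsim R^\beta$ on the net boundary contribution $\int_0^R\log|Z_\omega|$ at the intermediate abscissa $\tfrac12+A$, and you propose to extract this from a Phragm\'en--Lindel\"of three-lines argument between the convergent half-plane and the line $\Re s=\tfrac12-\|\omega\|_s$. But Phragm\'en--Lindel\"of and the subharmonicity of $\log|Z_\omega|$ yield \emph{upper} bounds at an intermediate line from upper bounds on the two outer lines; they cannot produce a lower bound in the middle, and there is no convexity principle that does. You flag this as a ``technical hurdle,'' but it is a logical obstruction, not a matter of tracking constants: knowing $\log|Z_\omega|$ grows linearly far to the left and is $O(1)$ far to the right is entirely consistent with $\log|Z_\omega|$ being extremely negative (or the integral cancelling) on the line $\Re s=\tfrac12+A$, precisely because of the zeros you are trying to count. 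Relatedly, the role you assign to $\|\omega\|_s$ is not justified: it appears as the abscissa of the left line and as the horizontal width of your rectangle, but neither the functional equation of $Z_\omega$ (whose precise form for real $\omega$ you would also need to establish, presumably from M\"uller's nonunitary trace formula) nor the location of the trivial zeros naturally involves the stable norm. In the paper, $\|\omega\|_s$ enters for a transparent reason: the geodesic side of the trace formula with the bump function centered at $k\ell_{\gamma_0}$ is bounded below by $\ell_0 e^{k\ell_0(\bar\omega_{\gamma_0}-\frac12)}$, and taking the supremum of $\bar\omega_{\gamma_0}$ over closed geodesics recovers $\|\omega\|_s$ via the variational characterization \eqref{e:stablenorm}. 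That positivity along a chosen geodesic is exactly the missing lower-bound input that your interpolation step cannot supply; if you want to stay in the zeta-function picture, you would need to replace the Phragm\'en--Lindel\"of step with a genuinely positive quantity coming from the Euler product (e.g.\ a diagonal/moment argument in the spirit of the paper's proof of Theorem \ref{t:essgap-lb}, or a contour shift that isolates a single geodesic's contribution).
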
 
Here, we regard $\omega\in \mathcal{H}^{1}(X,\mathbb{R})$ as a function on the cosphere bundle $S^\ast M$ by
$$\omega(x,\xi) = \langle \omega, \xi \rangle_x.$$
$\Press(\omega)$ and $\|\omega\|_s$ are the pressure and the stable norm of $\omega(x,\xi)$ under the geodesic flow of $M$, respectively, see \eqref{e:pressure} and \eqref{e:stablenorm} for the definition. We note that the condition \eqref{e:range-A} requires $\omega$ to be sufficiently large for fixed $\beta\in(0,1)$. 

We define the essential spectral gap for the twisted Laplacian $\Delta_\omega$ as 
\begin{equation}
\label{e:essgap}
G_{\omega}=G_{X,\omega}:=\limsup\limits_{r\in\Sp(\omega), \ |\Re r|\to+\infty}\Im r,
\end{equation} 
or equivalently,
$$G_\omega:=\inf\{A>0: N_A(R)=\mathcal{O}(1), \ R\to+\infty\}.$$
Then Theorem \ref{t:sublinear} implies the following lower bound: 
\begin{equation}
\label{e:essgap-lbweak}
G_\omega\geq 2\|\omega\|_s-\Press(\omega)-\frac{1}{2}.
\end{equation}
In Remark \ref{generalizetohigherdimension}, we explain how to generalize the lower bound \eqref{e:essgap-lbweak} to higher-dimensional non-unitary representations with non-negative traces.

Our second theorem gives a different lower bound for $G_\omega$:
\begin{thm}
\label{t:essgap-lb}
\begin{equation}
\label{e:essgap-lb}
G_{\omega} \geq \frac{\Press(2\omega)-1}{2}-(2\Press(\omega)-\Press(2\omega))=\frac{3}{2}\Press(2\omega)-2\Press(\omega)-\frac{1}{2}.
\end{equation}
\end{thm}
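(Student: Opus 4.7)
The plan is to adapt the sublinear lower bound framework of Theorem~\ref{t:sublinear} with a sharper dynamical estimate, replacing the $L^\infty$-type control via the stable norm $\|\omega\|_s$ with an $L^2$-type control via the pressure $\Press(2\omega)$. The key observation is that, on the geometric side of the twisted trace formula, the relevant closed-geodesic sum $\sum_\gamma w(\gamma)\,e^{\oint_\gamma\omega}$ can be compared with its mean-square $\sum_\gamma w(\gamma)\,e^{2\oint_\gamma\omega}\sim e^{\Press(2\omega) T}$ via Cauchy--Schwarz against $\sum_\gamma w(\gamma)\sim e^{T}$ (the untwisted prime geodesic theorem, reflecting topological entropy $=1$). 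This comparison produces a new threshold for the parameter $A$ in the counting function $N_A(R)$, distinct from the stable-norm threshold of Theorem~\ref{t:sublinear}.

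I would proceed as follows. First, choose a test function $h$ with $|h|\sim 1$ on the strip $\{|\Re r - R|\lesssim 1,\ \Im r\in [0,A]\}$ and $\widehat{h}$ supported on a sublinear scale $[0,T]$ with $T=R^{\alpha}$. Then the twisted Selberg-type trace formula produces, on the geometric side, a sum $\sum_\gamma w(\gamma)\,e^{\oint_\gamma\omega}\,\widehat{h}(\ell(\gamma))$ which, by Cauchy--Schwarz, can be related to its second moment (in the $e^{2\oint_\gamma\omega}$ weight) governed by $\Press(2\omega)$. Second, assume for contradiction that $G_\omega < A := \tfrac{3}{2}\Press(2\omega)-2\Press(\omega)-\tfrac{1}{2}$, so that $N_A(R)$ is bounded. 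The spectral side is then controlled, and a contradiction emerges with the geometric growth rate produced by $\Press(2\omega)$ (via Cauchy--Schwarz and the prime geodesic theorem), after accounting for the Plancherel-type factor $1/\sinh(\ell/2)\sim e^{-\ell/2}$ (the source of the $-\tfrac{1}{2}$). Third, the exponents balance exactly at the threshold $\tfrac{3}{2}\Press(2\omega)-2\Press(\omega)-\tfrac{1}{2}$, yielding the claimed lower bound on $G_\omega$.

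The main obstacle is the rigorous deployment of the twisted trace formula for the non-self-adjoint operator $\Delta_\omega$: since $(\Delta_\omega)^{*}=\Delta_{-\omega}\neq\Delta_\omega$, one cannot rely on the standard Selberg trace formula and instead needs either an approximate version (compatible with the test function $h$ at the sublinear scale $T=R^{\alpha}$) or the semiclassical wave-trace formalism used in Anantharaman~\cite{naliniearly}. A second delicate point is to ensure that the Cauchy--Schwarz step preserves the convexity defect $2\Press(\omega)-\Press(2\omega)\geq 0$; this defect must enter with the correct sign to yield the bound $\tfrac{3}{2}\Press(2\omega)-2\Press(\omega)-\tfrac{1}{2}$, rather than the weaker bound $\Press(2\omega)-\Press(\omega)-\tfrac{3}{2}$ obtained by combining Theorem~\ref{t:sublinear} with the trivial estimate $\|\omega\|_s\geq\tfrac{1}{2}(\Press(2\omega)-1)$.
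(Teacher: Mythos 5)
Your high-level instinct is right: the $\Press(2\omega)$ threshold must enter through a second-moment (``$L^2$'') estimate on the geometric side, whereas Theorem~\ref{t:sublinear} used only a single closed geodesic (an $L^\infty$/stable-norm estimate). But the concrete mechanism you propose does not deliver this. The core technical device in the paper's proof is a Gaussian frequency average of the \emph{squared} geometric side: one takes
$g(s)=e^{is\xi}\psi(s-t)+e^{-is\xi}\psi(-s-t)$ with $\psi\in C_c^\infty((-1,1))$, writes the geodesic sum $S(t,\xi)$ from the exact twisted Selberg trace formula~\eqref{e:twisted-selberg}, and computes
$I(t,\sigma)=\frac{1}{\sqrt{2\pi}\sigma}\int_{\mathbb{R}}|S(t,\xi)|^2 e^{-\xi^2/2\sigma^2}\,d\xi$.
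Expanding $|S(t,\xi)|^2$ as a double sum over $(\gamma,\tilde\gamma)$ and using positivity, the \emph{diagonal} terms $\gamma=\tilde\gamma$ produce exactly $\sum_{|\ell_\gamma-t|\le\frac12}\frac{(\ell_\gamma^\#)^2e^{2\int_\gamma\omega}}{4\sinh^2(\ell_\gamma/2)}\gtrsim e^{(\Press(2\omega)-1-\varepsilon)t}$ by the equilibrium distribution theorem applied to $2\omega$. This is where $\Press(2\omega)$ enters --- not through Cauchy--Schwarz.

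The Cauchy--Schwarz step you describe --- comparing $\sum_\gamma w(\gamma)e^{\int_\gamma\omega}$ with $\sum_\gamma w(\gamma)e^{2\int_\gamma\omega}$ by dividing by $\sum_\gamma w(\gamma)\sim e^T$ --- gives at best $\sum L_\gamma^2\gtrsim(\sum L_\gamma)^2/\#\{\gamma\}\sim e^{(2\Press(\omega)-2)t}$ on a generic surface (where length-spectrum multiplicities are bounded). By strict convexity of the pressure this is \emph{strictly weaker} than $e^{(\Press(2\omega)-1)t}$: one has $\Press(2\omega)-1>2\Press(\omega)-2$ whenever $\omega\neq0$. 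So your route cannot recover the stated bound for general compact hyperbolic surfaces; it only works on arithmetic surfaces, where the length spectrum has $\sim e^{t/2}$ distinct values in a unit window and the Cauchy--Schwarz exponent $2\Press(\omega)-\tfrac32$ becomes competitive --- that is precisely Section~\ref{s:arithmetic}, which yields~\eqref{e:essgap-arithmetic}, not~\eqref{e:essgap-lb}. You are also off on the scale of the frequency average: the contradiction is obtained with $\sigma=e^{bt}$ (exponential in the physical support length $t$, with $b=2\Press(\omega)-\Press(2\omega)+2\varepsilon$), not a polynomial scale $T=R^\alpha$; that scale choice is the engine of Theorem~\ref{t:sublinear}, not of this theorem. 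Finally, the ``obstacle'' you raise about the trace formula is not one: an exact twisted Selberg trace formula \eqref{e:twisted-selberg} is available (Anantharaman, M\"uller), no approximate or semiclassical-wave substitute is needed. In summary, the missing idea is the Gaussian average $I(t,\sigma)$ of $|S(t,\xi)|^2$ and the use of diagonal-term positivity; the Cauchy--Schwarz against the prime geodesic count would fail to reach the threshold $\frac32\Press(2\omega)-2\Press(\omega)-\frac12$ for non-arithmetic surfaces.
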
 
\begin{rem}
There is also a sublinear growth of the form \eqref{e:sublinear} from the proof of \eqref{e:essgap-lb}, but we only manage to obtain for $\beta\in(0,\frac{1}{2})$, see Section \ref{s:essgap}.
\end{rem}

For compact arithmetic surfaces given by a quaternion algebra, Anantharaman \cite[Corollary 1]{nalinideviation} proves sublinear growth \eqref{e:sublinear} with the following range
\begin{equation}
\label{e:sublinear-arithmetic}
0<A<\Press(\omega)-\frac{3}{4}-\frac{1}{2(1-\beta)},
\end{equation}
and thus 
\begin{equation}
\label{e:essgap-arithmetic}
G_\omega\geq\Press(\omega)-\frac{5}{4}.
\end{equation}

The twisted Selberg zeta function is defined as 
\begin{equation}
\label{e:twisted-zeta}
Z_\omega(s)=Z_{X,\omega}(s):=\prod_{k=0}^\infty\prod_{\gamma\in\mathcal{P}(X)}\left(1-e^{\int_\gamma\omega}e^{-(s+k)\ell_\gamma}\right),\quad \Re s\gg 1.
\end{equation}
Here $\mathcal{P}(X)$ is the set of oriented prime geodesics $\gamma$ and $\ell_\gamma$ is the length of $\gamma$. As the usual Selberg zeta function \cite{selberg}, $Z_\omega(s)$ has a meromorphic continuation to $\mathbb{C}$ and the zeroes of the $Z_\omega$ are given by (see e.g. M\"{u}ller \cite{muller}, Frahm--Spilioti \cite{frahmspilioti} and Naud--Spilioti \cite{naudspilioti})
\begin{itemize}
\item the trivial zeroes at $-k$, with multiplicity $-(2k+1)\chi(X)$, $k\in\mathbb{N}$. Here $\chi(X)$ is the Euler characteristic of $X$.
\item the spectral zeroes at $\frac{1}{2}\pm ir_{j}$, where $r_j\in\Sp(X,\omega)$ with the same multiplicity.
\end{itemize}
Therefore the asymptotic version of Riemann hypothesis for $Z_\omega$ means $G_\omega=0$. Theorem \ref{t:sublinear}, in particular, \eqref{e:essgap-lb} implies that $G_\omega>0$ for $\omega$ large enough, i.e. the failure of the asymptotic Riemann hypotheses for $Z_\omega$. When $\omega=0$, i.e. the usual Selberg zeta function, of course $G_\omega=0$ as all eigenvalues of the usual Laplacian are real.  Moreover, based on an observation of Anantharaman \cite{nalinideviation}, we have the following result on the eigenfunctions in the high-frequency limit $\Re r \to \infty$:
\begin{thm}
\label{t:nonque}
If there exists a closed geodesic $\gamma$ such that 
\begin{equation}
\int_\gamma\omega>\frac{3}{2}\ell_\gamma,
\end{equation}
then $G_\omega>0$ and the quantum unique ergodicity fails for the twisted Laplacian $\Delta_\omega$.
\end{thm}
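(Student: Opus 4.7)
\textbf{Proof plan for Theorem \ref{t:nonque}.} The plan is to handle the two claims separately. The spectral gap $G_\omega>0$ follows directly from Theorem \ref{t:essgap-lb} combined with convexity of the topological pressure. The failure of quantum unique ergodicity is then deduced, following the observation of \cite{nalinideviation}, by identifying the imaginary part $\Im r_j$ of a spectral parameter with the integral of $\omega$ against the semiclassical measure of the corresponding eigenfunction.

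For the spectral gap, I would apply Theorem \ref{t:essgap-lb}. The variational principle, applied to the invariant probability measure $\mu_\gamma$ uniformly distributed on $\gamma$ (which has zero entropy), gives
\[
\Press(2\omega)\ \geq\ h_{\mu_\gamma}+\int 2\omega\,d\mu_\gamma\ =\ \frac{2}{\ell_\gamma}\int_\gamma\omega\ >\ 3.
\]
Since $t\mapsto\Press(t\omega)$ is convex and $\Press(0)=1$ (topological entropy of the geodesic flow on a compact hyperbolic surface), one has $\Press(\omega)\leq\tfrac{1}{2}(\Press(2\omega)+1)$. Substituting these bounds into \eqref{e:essgap-lb} yields
\[
G_\omega\ \geq\ \tfrac{3}{2}\Press(2\omega)-\bigl(\Press(2\omega)+1\bigr)-\tfrac{1}{2}\ =\ \tfrac{1}{2}\Press(2\omega)-\tfrac{3}{2}\ \geq\ \frac{1}{\ell_\gamma}\int_\gamma\omega-\tfrac{3}{2}\ >\ 0.
\]

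For the failure of QUE, the starting point is the $L^2$-pairing of the eigenvalue equation $-\Delta_\omega\phi_j=\lambda_j\phi_j$ with $\phi_j$. After integration by parts this becomes
\[
\lambda_j\ =\ \int_X|d\phi_j|^2\,dV\ +\ 2\int_X\bar\phi_j\langle\omega,d\phi_j\rangle\,dV\ -\ \int_X|\omega|^2|\phi_j|^2\,dV.
\]
The first and third terms are real; writing $r_j=\alpha_j+i\beta_j$, taking imaginary parts and using that $\omega$ is real-valued gives
\[
2\alpha_j\beta_j\ =\ \Im\lambda_j\ =\ 2\int_X\langle\omega,\Im(\bar\phi_j\,d\phi_j)\rangle\,dV.
\]
In the high-frequency limit $\alpha_j\to\infty$, the rescaled right-hand side $\alpha_j^{-1}\int_X\langle\omega,\Im(\bar\phi_j\,d\phi_j)\rangle\,dV$ converges, along a subsequence, to $\int_{S^*X}\omega(x,\xi)\,d\mu(x,\xi)$, where $\mu$ is a corresponding semiclassical measure and $\omega$ is viewed as a function on $S^*X$ as in the paper. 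By the first part $G_\omega>0$, so one can extract an infinite subsequence with $\alpha_j\to\infty$ and $\beta_j\geq A$ for some $A\in(0,G_\omega)$; any weak limit $\mu$ along such a subsequence then satisfies $\int\omega\,d\mu\geq A>0$. The Liouville measure, being invariant under the parity symmetry $\xi\mapsto-\xi$ on $S^*X$ under which $\omega$ is odd, satisfies $\int\omega\,d\mu_L=0$, so $\mu\neq\mu_L$, which contradicts QUE.

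The main obstacle is the rigorous semiclassical justification of the limit $\beta_j\to\int\omega\,d\mu$ for the non-self-adjoint operator $\Delta_\omega$: one needs a semiclassical pseudodifferential calculus at scale $h_j=\alpha_j^{-1}$, Wigner distributions $W_{\phi_j}$ whose weak subsequential limits are probability measures on $S^*X$ (straightforward since $X$ is compact) and invariant under the geodesic flow (by the usual Egorov argument, since the principal symbol of $\Delta_\omega$ is $|\xi|^2$, with the $\omega$-contribution being subprincipal), and the precise matching of the integrated quantum current with the measure-theoretic integral $\int\omega\,d\mu$. Once this framework is in place in the spirit of \cite{nalinideviation}, the quantum-classical identification is routine.
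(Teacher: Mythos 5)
Your proposal is correct and reaches the same conclusion, but it takes a genuinely different route from the paper's on both halves.

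For $G_\omega>0$, the paper uses the simpler bound \eqref{e:essgap-lbweak} (a consequence of Theorem~\ref{t:sublinear}), not Theorem~\ref{t:essgap-lb}: the hypothesis gives $\|\omega\|_s>\tfrac{3}{2}$ directly from \eqref{e:stablenorm}, and then
$$2\|\omega\|_s-\Press(\omega)-\tfrac{1}{2}>\|\omega\|_s+1-\Press(\omega)\geq0$$
by \eqref{ineqpress}. You instead route through Theorem~\ref{t:essgap-lb}, using the variational principle with the periodic-orbit measure $\delta_\gamma$ to get $\Press(2\omega)>3$ and the convexity of $t\mapsto\Press(t\omega)$ with $\Press(0)=1$ to get $\Press(\omega)\leq\tfrac{1}{2}(\Press(2\omega)+1)$. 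Both are valid; the paper's version is shorter, while yours illustrates that Theorem~\ref{t:essgap-lb} alone already suffices. For failure of QUE, the paper takes a subsequence with $\Im r_{j_k}\to G_\omega$ and a second subsequence with $\Im r_{j_k}\to 0$ (using the spectral concentration \eqref{e:concentration}), shows via \cite[Theorems E.43--E.44]{resbook} that any semiclassical defect measure $\mu$ of a subsequence with $\Im r_{j_k}\to\alpha$ satisfies $\int_{S^*X}\omega\,d\mu=\alpha\,\mu(S^*X)$, and concludes that the two resulting probability measures must differ. You avoid the second subsequence by noting that the Liouville measure is invariant under $\xi\mapsto-\xi$ while $\omega(x,\xi)=\langle\omega,\xi\rangle_x$ is odd, so $\int\omega\,d\mu_L=0$; this is a clean shortcut. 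You acknowledge, but do not carry out, the rigorous semiclassical step identifying the quantum current with $\int\omega\,d\mu$ — this is precisely the content the paper supplies via the non-self-adjoint defect-measure propagation result of \cite{resbook}. With that input, your outline is complete and correct.
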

Here the quantum unique ergodicity (QUE) refers to the equidistribution of the eigenfunctions in both physical and momentum space in the semiclassical limit. For the usual Laplacian--Beltrami operators on compact hyperbolic surfaces or more general compact manifolds with negative curvature, Rudnick--Sarnak \cite{que} conjectured the quantum unique ergodicity of Laplacian eigenfunctions, based on the pioneer work of Shnirelman \cite{shnirelman}, also later work of Colin de Verdi\`{e}re \cite{CdV} and Zelditch \cite{zelditch} of a weaker version, called the quantum ergodicity theorem, which is the equidistribution for a density one subsequence of eigenfunctions. Lindenstrauss \cite{linden} proved the arithmetic version. Some recent developments include Anantharaman \cite{nalinientropy}, Riviere \cite{riviere}, Dyatlov--Jin \cite{meassupp}, Dyatlov--Jin--Nonnenmacher \cite{varfup}.


\subsection{Spectral distribution of damped wave operators}
\label{s:previous}
In the high-frequency limit $\Re r\to \infty$, the eigenvalue problem of the twisted Laplacian \eqref{e:eigenvalues} and the stationary damped wave equation (see e.g. \cite{lebeau}) 
\begin{equation}
\label{e:dampwave}
P(\tau)u:=(-\Delta-\tau^2-2i\tau a)u=0,\quad a\in C^\infty(M;[0,\infty)), \ |\Re\tau|\to+\infty
\end{equation}
can be unified as a semiclassical damped wave operator $P(z,h)=P+ihQ(z;h)$, $h\to0+$, where $P=-h^2\Delta$ and $Q=Q(z;h)$ is the first order ``damping" term. The spectral theory of such semiclassical damped wave operator is first developed by Sj\"{o}strand \cite{sjostrand} and applied to the twisted Laplacian by Anantharaman \cite{nalinideviation} and recent work of the first author \cite{gong}. Most results are formulated in terms of \eqref{e:dampwave} and here we reformulate some general results in the case of the twisted Laplacian \eqref{e:twisted-laplacian} on hyperbolic surfaces. Firstly all eigenvalues lie in a strip:
$$\Sp(\omega)\subset\left\{0\leq\Im r\leq \Press(\omega)-\frac{1}{2}\right\}$$
with the highest eigenvalue $r_0=i(\Press(\omega)-\frac{1}{2})$, thus $G_\omega\leq\Pr(\omega)-\frac{1}{2}$, see Schenck \cite{schenckpressure,schenck} for the "pressure gap" for \eqref{e:dampwave}.

Moreover, the number of eigenvalues satisfies the Weyl law (see Sj\"{o}strand \cite{sjostrand} as well as earlier work of Markus--Matsaev \cite{mama})
\begin{equation}
\label{e:weyllaw}
N_0(R)=\#\Sp(\omega)\cap\{|\Re r|\leq R\}=\frac{\mathrm{Vol}(X)}{4\pi}R^2+\mathcal{O}(R).
\end{equation}
Sj\"{o}strand \cite{sjostrand} proved that for \eqref{e:dampwave}, due to the ergodicity of the geodesic flow, the imaginary part of the eigenvalues (i.e. ``decay rate") concentrate near the average of the ``damping term". In the case of the twisted Laplacian, the average of $\omega(x,\xi)$ over $S^\ast X$ is 0 and we have for all $A>0$, $N_A(R)=o(R^2)$. Anantharaman \cite{nalinideviation} gives a better estimate  
\begin{equation}
\label{e:concentration}
N_A(R)=\mathcal{O}(R^{2-c}), \quad R\to\infty,
\end{equation}
where the constant $c=c(\omega,A)>0$ is explicitly given by some large deviation rate and maximal expansion rate of the geodesic flow. See also Naud--Spilioti \cite{naudspilioti} for an analogue of the Weyl law \eqref{e:weyllaw} and spectral deviation \eqref{e:concentration} in the setting of higher-dimensional non-unitary representations. On the other hand, the first author \cite{gong} gives a better width for the spectral concentration: there exists $c(\omega,X)>0$ such that for any $0<c<c(\omega, X)$ and $\varepsilon>0$,
$$\#\Sp(\omega)\cap\{|\Re r|\leq R, \Im r\geq (\log R)^{-\frac{1-\varepsilon}{2}}\}=\mathcal{O}\left(\frac{R^2}{e^{c(\log R)^{\varepsilon}}(\log R)^{\varepsilon-1}}\right).$$
We also mention the recent work \cite{gongrandom} of the first author on the spectral distribution of the twisted Laplacian on typical hyperbolic surfaces with large genus.

\subsection{Related works on resonances for convex co-compact hyperbolic surfaces}
\label{s:cocompact}
The spectral distribution for the twisted Laplacian on a compact hyperbolic surfaces resembles the resonance distribution for a convex co-compact hyperbolic surfaces in many ways. We refer to the book by Borthwick \cite{borthwick} for a fairly complete overview on the later subject. In particular, our work is very much motivated by the early work of Jakobson--Naud \cite{jakobsonnaud} on the essential spectral gap for resonances (as well as similar related results on Pollicott--Ruelle resonances for Anosov flows by Jin--Zworski \cite{localtrace}). For a convex co-compact Fuchsian subgroup $\Gamma$ of $\PSL(2,\mathbb{R})$, the corresponding hyperbolic surface $X=\mathbb{H}/\Gamma$ has infinite volume and one can define the scattering resonance $\Res(-\Delta_X)\subset\mathbb{C}$ for the Laplacian operator on $X$ as the poles of the meromorphic continuation of the resolvent $\left(-\Delta_X-z^2-\frac{1}{4}\right)^{-1}:C_c^\infty(X)\to C^\infty(X)$. This again corresponds to the zeroes of the Selberg zeta function $Z_X$ for this convex co-compact hyperbolic surface. Let $\delta\in(0,1)$ be the Hausdorff dimension of the limit set of $\Gamma$, then the highest resonance is $z_0=i(\delta-\frac{1}{2})$ and the essential spectral gap can be defined as
$$G_X:=\inf\{A\in\mathbb{R}: \#\Res(-\Delta_X)\cap\{\Im z>A\}<\infty\}.$$
Here we always have $G_X\leq 0$ as there are only finitely many resonances in $\{\Im z\geq0\}$. 
\begin{itemize}
\item For the upper bound on $G_X$, there is a natural pressure bound $G_X\leq \delta-\frac{1}{2}$ which is better than the trivial bound $G_X\leq0$ when $\delta<\frac{1}{2}$. Naud \cite{naud} improves to $G_X<\delta-\frac{1}{2}$ and Dyatlov--Jin \cite{regfup} gives a quantitative version. On the other hand, Bourgain--Dyatlov \cite{fullgap} improved the trivial bound $G_X<0$ with the quantitative version given by Jin--Zhang \cite{effup}.
\item The first lower bound on $G_X$ as well as a weak version of sublinear growth is proved by Guillope--Zworski \cite{guillopezw}. The sublinear growth similar to Theorem \ref{t:sublinear} is implicitly contained in Jin--Tao \cite{axioma}. Jakobson--Naud \cite{jakobsonnaud} improve to $G_X\geq-\frac{1}{2}(1-\delta+2\delta^2)$ and conjecture that $G_X=\frac{\delta-1}{2}$. Note the slight different convention here comparing to \cite{jakobsonnaud}.
\end{itemize}

\subsection{Organization of the paper}
\label{s:organ}
The paper is organized as follows. In Section \ref{s:selberg}, we review some basic facts about the spectrum of the twisted Laplacian, including the twisted Selberg trace formula. In Section \ref{s:dynamic}, we review some basic concepts from hyperbolic dynamical systems adapting to the geodesic flow on a compact hyperbolic surface $X$. In Section \ref{s:proof}, we give the proof for the main theorem. In Section \ref{s:nonque}, we explain how to show quantum unique ergodicity fails when the essential spectral gap is positive, following the idea of Anantharaman \cite{nalinideviation}.

\subsection*{Notation}
We use the following notation in the paper: The constant $C>0$ in the inequalities will vary from place to place, depending on the surface $X$, the harmonic 1-form $\omega$ and the test function chosen in the proof, but not other parameters if not specified.

\subsection*{Acknowledgement}
The work is supported by the National Key R \& D Program of China 2022YFA100740. We would like to thank Fr\'{e}d\'{e}ric Naud to suggestion for the paper \cite{jakobsonnaud}, and thank Nalini Anantharaman and Laura Monk for numerous useful discussions. 

\section{Preliminaries}
  \label{s:prelims}
  
\subsection{The twisted Laplacian and Selberg trace formula}
\label{s:selberg}
We can equivalently define the twisted Laplacian operator \eqref{e:twisted-laplacian} as follows: Fix a point $o\in\mathbb{H}$ and lift the harmonic form $\omega$ from $X=\mathbb{H}/\Gamma$ to $\mathbb{H}$. For any $f\in C^\infty(\mathbb{H})$,
$$\Delta_\omega f:=e^{\int_0^x\omega}\Delta(e^{-\int_0^x\omega}f).$$
This coincides with \eqref{e:twisted-laplacian} when $f$ is $\Gamma$-invariant. In this way, the twisted Laplacian $\Delta_{\omega}$ has the same spectrum as the twisted Bochner Laplacian $\Delta_{\rho}$ with the one dimensional representation $\rho: \Gamma=\pi_1(X) \to \mathbb{C}$ defined as 
$$\rho(\gamma):=e^{\int_{\gamma}\omega}.$$
For any representation $\rho:\pi_1(X)=\Gamma\to\GL(V)$, Naud--Spilioti \cite{naudspilioti} define the critical exponent $\delta(\rho)$ of $\Delta_\rho$ and show that
$\Spec(-\Delta_\rho)\subset\mathcal{C}_{\delta(\rho)}$,
where $\mathcal{C}_\sigma,\sigma>\frac{1}{2}$ is the following parabolic region
$$\mathcal{C}_\sigma:=\left\{\Re\lambda\geq\sigma(1-\sigma)+\frac{(\Im\lambda)^2}{(1-2\sigma)^2}\right\}.$$
Passing to the case of one-dimensional representations \eqref{e:twisted-laplacian} and the spectral parameter $r$ with $\lambda=\frac{1}{4}+r^2$ as in the beginning of the paper, we have
$$\Sp(\omega)\subset\left\{0\leq\Im r\leq\delta(\omega)-\frac{1}{2}\right\},$$
and $r_0=i\left(\delta(\omega)-\frac{1}{2}\right)$ where the critical exponent reads
\begin{equation}
\label{e:critical-exponent}
\delta(\omega):=\inf \left\{s>0:\sum_{\gamma\in\mathcal{G}(X)}e^{\int_{\gamma}\omega-s\ell_\gamma}<\infty\right\}.
\end{equation}
Here we denote $\mathcal{G}(X)$ as the collection of all oriented closed geodesics $\ell$ on $X$. For any $\ell\in\mathcal{G}(X)$, we use $\ell_\gamma$ to denote its length and $\ell_\gamma^\#$ its primitive length.

Anantharaman \cite{nalinideviation} (see also M\"{u}ller \cite{muller}) proved the following twisted Selberg trace formula relating the spectrum of $\Delta_\omega$ with the geodesics on $X$, analogous to the usual Selberg trace formula \cite{selberg}: For any even functions $g=g(s):\mathbb{R}\to\mathbb{C}$ which is smooth and decays faster enough, for example, $g\in C_c^\infty(\mathbb{R})$,
\begin{equation}
\label{e:twisted-selberg}
\sum_{j=0}^\infty\hat{g}(r_j)=\frac{\Vol(X)}{4\pi}\int_{-\infty}^\infty r\hat{g}(r)\tanh(\pi r)dr
+\sum_{\gamma\in\mathcal{G}(X)}\frac{e^{\int_\gamma\omega}\ell_{\gamma}^\# g(\ell_\gamma)}{2\sinh(\ell_\gamma/2)}.
\end{equation}
Here $\hat{g}:\mathbb{R}\to\mathbb{C}$ is the Fourier(-Laplace) transform of $g$,
$$\hat{g}(r)=\int_{\mathbb{R}}e^{-irs}g(s)ds.$$
By Paley--Wiener--Schwarz theorem (see H\"{o}rmander \cite[\S 7.3]{hormander}), $\hat{g}$ is an entire function on $\mathbb{C}$ and if $\supp g\subset[-T,T]$, then for any $M\in\mathbb{N}$, there exists $C=C_{M,g}$ such that
\begin{equation}
\label{e:pws}
|\hat{g}(r)|\leq Ce^{T|\Im r|}(1+|\Re r|)^{-M},\quad r\in\mathbb{C}.
\end{equation}
Therefore the left-hand side and the first term of the right-hand side of \eqref{e:twisted-selberg} converges absolutely.

\subsection{Dynamical preliminaries}
\label{s:dynamic}
In this subsection, we review the thermodynamic formalism (see e.g. \cite{bowen, ruelle}) in the setting of the geodesic flow $\varphi^t$ on a compact hyperbolic surface $X$. Let $\mathcal{M}$ be the space of $\varphi^{t}$-invariant probability measure on $S^\ast X$. For any $\mu \in \mathcal{M}$, we denote by $h_{\KS}(\mu)$ its Kolmogorov--Sinai entropy, which is an affine function of $\mu$. Moreover, for the geodesic flow on compact hyperbolic surface, we have $0\leq h_{\KS}(\mu)\leq 1$ and
\begin{itemize}
\item for a $\delta$-measure $\delta_\gamma$ supported on a closed orbit for $\varphi^t$, $h_{\KS}(\delta_\gamma)=0$;
\item $h_{\KS}(\mu)=1$ if and only if $\mu$ is the Liouville measure.
\end{itemize}
The pressure $\Press: C^0(S^\ast X;\mathbb{R})\to\mathbb{R}$ is then defined as the Legendre transform of the Kolmogorov--Sinai entropy function $h_{\KS}:\mathcal{M}\to\mathbb{R}$:
\begin{equation}
\label{e:pressure}
\Press(f)=\sup_{\mu \in \mathcal{M}}\left\{h_{\KS}(\mu)+\int_{S^\ast X}fd\mu\right\},\quad f\in C^0(S^\ast X)
\end{equation}

If $f$ is H\"{o}lder, then the supremum is attained for a unique $\mu$, called the equilibrium measure of $f$. The functional $\Press$ is analytic on H\"{o}lder space. 

Now, we identify the harmonic 1-form $\omega\in\mathcal{H}^1(X;\mathbb{R})$ as the function $\omega(x,\xi):=\langle \omega,\xi\rangle_{x}$ on $T^\ast X$ or its restriction on $S^\ast X$. Then we have the following equilibrium distribution Theorem, see Kifer \cite{kifer}:
\begin{equation}
\label{e:equilibrium}
\lim_{t\to \infty}\frac{1}{t}\log \left(\sum\limits_{\gamma \in \mathcal{G}(X), \ |\ell_{\gamma}-t|\leq \frac{1}{2}}e^{\int_{\gamma}\omega}\right)=\Press(\omega).
\end{equation}
In particular, we have the following asymptotic growth, see Parry--Pollicott \cite{pp}, 
$$\sum\limits_{\gamma \in \mathcal{G}(X), \ |\ell_{\gamma}-t|\leq \frac{1}{2}}e^{\int_{\gamma}\omega} \sim \frac{e^{t\mathrm{Pr}(\omega)}}{t\Press(\omega)},$$ 
This implies that the critical exponent \eqref{e:critical-exponent} is exactly the pressure:
\begin{equation}
\label{e:critical-pressure}
\delta(\omega)=\Press(\omega).
\end{equation}

Next, we introduce the stable norm of the $\omega \in \mathcal{H}^{1}(X,\mathbb{R})$:
\begin{equation}
\label{e:stablenorm}
\|\omega\|_{s}=\sup_{\mu \in \mathcal{M}}\int_{S^*X}\omega d\mu=\sup_{\gamma \in \mathcal{G}(X)}\bar{\omega}_\gamma.
\end{equation}
Here we define $\bar{\omega}_\gamma$ is the average of $\omega$ along $\gamma$:
\begin{equation}
\label{e:omega-average}
\bar{\omega}_\gamma=\frac{\int_\gamma\omega}{\ell_\gamma}.
\end{equation}
In particular, since $h_{\KS}(\mu)=1$ for any $\mu\in\mathcal{M}$,
\begin{equation}
\label{ineqpress}
\max\{1,\|\omega\|_{s}\}\leq \mathrm{Pr}(\omega) \leq 1+\|\omega\|_{s}.
\end{equation}

Besides, the restriction of $\Press$ to any line $\{f+tg, t \in \mathbb{R}\}\subset C^\infty(T^\ast X;\mathbb{R})$ is strictly convex, unless $g$ is cohomologous to a constant \cite{ratner}, i.e. there exists a function $h\in C^\infty(T^\ast X;\mathbb{R})$ such that $g=\bar{g}+Vh$ where $V$ is the generating vector field of the geodesic flow $\varphi^t$ and $\bar{g}=\int_{S^\ast X}gd\mu_L$ is the average of $g$ in $S^\ast M$ under the probabilistic Liouville measure. An equivalent description for $g$ cohomologous to a constant is that for all unit speed closed geodesics $\gamma(t)$,
$$\int_{0}^{\ell_{\gamma}} g(\gamma(t))dt =\bar{g}\ell_\gamma.$$
Since any nonzero $\omega\in\mathcal{H}^1(X;\mathbb{R})$ is not cohomologous to a constant. We have $\Press(\omega)=1$ if and only of $\omega=0$. 

\section{Proof of the main theorems}
  \label{s:proof}
  
\subsection{Sublinear growth}
\label{s:sublinear}
We first prove Theorem \ref{t:sublinear} following the strategy in \cite{axioma} which originates from \cite{guillopezw} and \cite{localtrace} but with some improvement. 

We fix a function $\varphi\in C_c^\infty(\mathbb{R})$ with
the following properties: $\varphi(s)\geq0$ for all $s\in\mathbb{R}$, $\varphi(0)=1$ and $\supp\varphi\subset(-1,1)$. For any $0<\varepsilon<1<d$, we rescale the test function $\varphi$ to 
$$\varphi_{\varepsilon,d}(s):=\varphi\left(\frac{s-d}{\varepsilon}\right)\geq0$$
so that $\varphi_{\varepsilon,d}(d)=1$ and 
$$\supp\varphi_{\varepsilon,d}\subset(d-\varepsilon,d+\varepsilon)\subset(0,\infty).$$
The Paley--Wiener--Schwarz theorem \eqref{e:pws} shows that there is a constant depending only on $M>0$ and $\varphi$ such that for any $r\in\mathbb{C}$,
$$|\widehat{\varphi_{\varepsilon,d}}(r)
=|\varepsilon\hat{\varphi}(\varepsilon r)e^{-idr}|
\leq C_M\varepsilon(1+\varepsilon|\Re r|)^{-M}e^{(d+\varepsilon)|\Im r|}.$$
We choose the test function $g$ in the twisted Selberg trace formula \eqref{e:twisted-selberg} to be 
$$g(s)=\varphi_{\varepsilon,d}(s)+\varphi_{\varepsilon,d}(-s)$$
then for any $M>0$, there exists a constant $C_M$ such that for any $r\in\mathbb{C}$ with $\Im r\geq0$ and $0<\varepsilon<1<d$,
\begin{equation}
\label{e:pws-scale}
|\hat{g}(r)|=|\widehat{\varphi_{\varepsilon,d}}(r)+\widehat{\varphi_{\varepsilon,d}}(-r)|
\leq C_M\varepsilon(1+\varepsilon|\Re r|)^{-M}e^{(d+\varepsilon)\Im r}.
\end{equation}
Let us estimate all terms in \eqref{e:twisted-selberg} as follows: The sum on the left-hand side is again separated into two parts 
$$\sum_{r_j\in\Sp(X,\omega),\Im r_j<A}\hat{g}(r_j)+\sum_{r_j\in\Sp(X,\omega),\Im r_j\geq A}\hat{g}(r_j).$$
\begin{itemize}
\item In the first sum, we use \eqref{e:psi-scale} with $0\leq\Im r_j<A$, $M=3$, to get
$$\left|\sum_{\Im r_j<A}\hat{g}(r_j)\right|
\leq C\varepsilon e^{(d+\varepsilon)A}\sum_{\Im r_j<A}(1+\varepsilon|\Re r|)^{-3}
\leq C\varepsilon e^{(d+\varepsilon)A}\int_0^\infty(1+\varepsilon R)^{-3}dN_0(R).$$
By Weyl law \eqref{e:weyllaw}, there exists $C=C(X,\omega)>0$ such that $N_0(R)\leq C(1+R^2)$ for any $R\geq 0$ and thus with a constant $C$ only depending on $X,\omega$ and $\varphi$,
\begin{equation}
\label{e:sublinear-leftsmall}
\left|\sum_{\Im r_j<A}\hat{g}(r_j)\right|
\leq C\varepsilon e^{(d+\varepsilon)A}\int_0^\infty(1+\varepsilon R)^{-3}dN_0(R)\leq C\varepsilon^{-1}e^{(d+\varepsilon)A}.
\end{equation}
\item In the second sum, we use $N_A(R)$ in a similar way to get for any $M>0$, there exists a constant $C_M>0$ such that
$$\left|\sum_{\Im r_j>A}\hat{g}(r_j)\right|\leq C_M\varepsilon e^{(d+\varepsilon)(\Press(\omega)-\frac{1}{2})}\int_0^\infty(1+\varepsilon R)^{-M}dN_A(R).$$
Integration by parts and use the change of variables $t=\varepsilon R$ to get
$$\left|\sum_{\Im r_j>A}\hat{g}(r_j)\right|\leq C_M\varepsilon e^{(d+\varepsilon)(\Press(\omega)-\frac{1}{2})}\int_0^\infty(1+t)^{-M-1}N_A(\varepsilon^{-1}t)dt.$$
Here we also absorb the extra term $N_A(0)$ into the integral. Now we further separate the integral into two parts: Fix $a>0$ chosen later,
$$\int_0^\infty(1+t)^{-M-1}N_A(\varepsilon^{-1}t)dt
=\int_0^{\varepsilon^{-a}}+\int_{\varepsilon^{-a}}^\infty(1+t)^{-M-1}N_A(t/\varepsilon)dt.$$
For any $M>2$, we estimate the first integral by
\begin{equation}
\label{e:sublinear-leftlarge1}
\int_0^{\varepsilon^{-a}}(1+t)^{-M-1}N_A(\varepsilon^{-1}t)dt\leq C_MN_A(\varepsilon^{-a-1}).
\end{equation}
and the second integral similar to \eqref{e:sublinear-leftsmall} with $N_A(\varepsilon^{-1}t)\leq  N_0(\varepsilon^{-1}t)\leq C\varepsilon^{-2}t^2$ to get
\begin{equation}
\label{e:sublinear-leftlarge2}
\int_{\varepsilon^{-a}}^\infty(1+t)^{-M-1}N_A(\varepsilon^{-1}t)dt
\leq \varepsilon^{-2}\int_{\varepsilon^{-a}}^\infty(1+t)^{-M+1}dt\leq C_M\varepsilon^{a(M-2)-2}.
\end{equation}
Combining \eqref{e:sublinear-leftlarge1} and \eqref{e:sublinear-leftlarge2}, there exists a constant $C_M$ only depending on $X,\omega,\varphi$ and $M$ such that for any $0<\varepsilon<1<d$ and $a>0$,
\begin{equation}
\label{e:sublinear-leftlarge}
\left|\sum_{\Im r_j>A}\hat{g}(r_j)\right|\leq C_M\varepsilon e^{d(\Press(\omega)-\frac{1}{2})}(N_A(\varepsilon^{-a-1})+\varepsilon^{a(M-2)-2}).
\end{equation}
Here we further absorb the factor $e^{\varepsilon(\Press(\omega)-\frac{1}{2})}$ to the constant $C_M$ as $\varepsilon<1$.
\end{itemize}
The first term on the right-hand side is again estimated by \eqref{e:pws-scale} with $M=3$:
\begin{equation}
\label{e:sublinear-right1}
\frac{\Vol(X)}{4\pi}\left|\int_{-\infty}^\infty r\hat{g}(r)\tanh(\pi r)dr\right|
\leq C\int_{\mathbb{R}}\varepsilon|r|(1+\varepsilon|r|)^{-3}dr\leq C\varepsilon^{-1}.
\end{equation}
For the second term, we now choose $d=k\ell_0$ where $k\in\mathbb{N}^\ast$ and $\ell_0=\ell_{\gamma_0}$ where $\gamma_0\in\mathcal{P}(X)$ is chosen later. Since $g\geq0$, we only keep the term with $\gamma=k\gamma_0$ so that  $g(\ell_\gamma)=1$ and $\ell_\gamma^\#=\ell_0$ to get
\begin{equation}
\label{e:sublinear-right2}
\sum_{\gamma\in\mathcal{G}(X)}\frac{e^{\int_\gamma\omega}\ell_{\gamma}^\# g(\ell_\gamma)}{2\sinh(\ell_\gamma/2)}\geq
\frac{e^{k\int_{\gamma_0}\omega}\ell_0}{2\sinh(k\ell_0/2)}
\geq \ell_0e^{k\ell_0(\bar{\omega}_{\gamma_0}-\frac{1}{2})}.
\end{equation}
Here we recall $\bar{\omega}_{\gamma_0}$ is defined in \eqref{e:omega-average} as the average of $\omega$ along $\gamma_0$. 

Now we combine all the estimates \eqref{e:sublinear-leftsmall}, \eqref{e:sublinear-leftlarge}, \eqref{e:sublinear-right1} and \eqref{e:sublinear-right2}, so that there exists a constant $C$ only depending on $X,\omega,\varphi$ and a constant $C_M$ which can further depend on $M>2$ such that for any $\varepsilon\in(0,1)$, $a>0$, $A\in(0,\Press(\omega)-\frac{1}{2})$, $\gamma_0\in\mathcal{P}(X)$, $k\in\mathbb{N}$ sufficiently large so that $d=k\ell_0>1$, 
$$\ell_0e^{k\ell_0(\bar{\omega}_{\gamma_0}-\frac{1}{2})}
\leq C_M\varepsilon e^{k\ell_0(\Press(\omega)-\frac{1}{2})}(N_A(\varepsilon^{-a-1})+\varepsilon^{a(M-2)-2})+C\varepsilon^{-1}e^{k\ell_0A},$$
which is
$$\ell_0\varepsilon^{-1}e^{k\ell_0(\bar{\omega}_{\gamma_0}-\Press(\omega))}\leq C_M(N_A(\varepsilon^{-a-1})+\varepsilon^{a(M-2)-2})+C\varepsilon^{-2}e^{-k\ell_0(\Press(\omega)-\frac{1}{2}-A)}.$$
Now we let $\varepsilon=e^{-d/b}=e^{-k\ell_0/b}$ where $b>0$  to be chosen later and get
$$N_A(\varepsilon^{-a-1})\geq C_M^{-1}\ell_0\varepsilon^{-1-b(\bar{\omega}_{\gamma_0}-\Press(\omega))}-C_M\varepsilon^{-2+b(\Press(\omega)-\frac{1}{2}-A)}-\varepsilon^{a(M-2)-2}.$$
Now for any $\beta\in(0,1)$ and $a\in(0,\beta^{-1}-1)$, we take 
$$b=\frac{1-\beta(a+1)}{\Press(\omega)-\bar{\omega}_{\gamma_0}}>0$$
and $M>2+(2-\beta(a+1))/a$ so that
$$2-a(M-2)<\beta(a+1)=1+b(\bar{\omega}_{\gamma_0}-\Press(\omega)).$$
If $A>0$ satisfies
$$2-b(\Press(\omega)-\frac{1}{2}-A)<1+b(\bar{\omega}_{\gamma_0}-\Press(\omega))$$
which is just
\begin{equation}
\label{e:range-A-rough}
A<\bar{\omega}_{\gamma_0}-\frac{1}{2}-\frac{1}{b}=
\bar{\omega}_{\gamma_0}-\frac{1}{2}-\frac{\Press(\omega)-\bar{\omega}_{\gamma_0}}{1-\beta(a+1)},
\end{equation}
we get for any $k\in\mathbb{N}_\ast$ large enough, $\varepsilon=e^{-k\ell_0/b}$, there exists a constant $C>0$ depending on $X,\omega$ as well as $\varphi,M,A,a$,
$$N_A(\varepsilon^{-a-1})\geq\frac{1}{C}\varepsilon^{-\beta(a+1)}.$$
This implies that for any $\beta'<\beta$, there exists $R_0>0$ such that for any $R>R_0$, if $R\in(e^{k\ell_0(a+1)/b},e^{(k+1)\ell_0(a+1)/b})$, and then
$$N_A(R)\geq N_A(e^{k\ell_0(a+1)/b})\geq\frac{1}{C}
e^{\beta k\ell_0(a+1)b}\geq\frac{1}{C}R^{\beta k/(k+1)}\geq 
\frac{1}{C}R^{\beta'}.$$
Now in \eqref{e:range-A-rough}, we can choose $a$ arbitrarily small and by \eqref{e:stablenorm}, we take supreme over all $\gamma_0\in\mathcal{P}(X)$ to get $\|\omega\|_s$ instead of $\bar{\omega}_{\gamma_0}$ to get as long as \eqref{e:range-A}, we get the sublinear growth \eqref{e:sublinear} of $N_A(R)$ and this finishes the proof of Theorem \ref{t:sublinear}.

\begin{rem}\label{generalizetohigherdimension}
Our proof of Theorem \ref{t:sublinear} can be applied to 
$\Delta_\rho$ with representation $\rho:\Gamma\to\GL(V)$ satisfying the property $\tr(\rho(\gamma))\geq0$ for any $\gamma\in\Gamma$. However, in general, $\tr(\rho(\gamma)^k)\neq\tr(\rho(\gamma))^k$, we can only get the following lower bound of the essential spectral gap $G_\rho$ defined similarly as \eqref{e:essgap}
$$G_\rho\geq 2s(\rho)-\delta(\rho)-\frac{1}{2},$$
where $\delta(\rho)$ is the critical exponent  (see \cite{naudspilioti}) and 
$$s(\rho):=\limsup_{\ell_\gamma\to +\infty}\frac{\log\tr(\rho(\gamma))}{\ell_\gamma}.$$
Note that we still have $s(\rho)\leq\delta(\rho)\leq1+s(\rho)$ and thus $G_\rho>0$ if $s(\rho)>\frac{3}{2}$.
\end{rem}

\subsection{Essential spectral gap}
\label{s:essgap}
In this section, we prove theorem \ref{t:essgap-lb}. Our method follows from Jakobson--Naud \cite{jakobsonnaud}, see also the appendix of Jin--Zworski \cite{localtrace} by Naud.

We choose an even function $\psi\in C_c^\infty((-1,1))$ such that $\psi\geq0$ everywhere and $\psi(s)=1$ for $|s|\leq\frac{1}{2}$. For $t\geq 1$ and $\xi\in\mathbb{R}$, we define
\begin{equation}
\label{e:psi-scale}
\psi_{t,\xi}(s):=e^{is\xi}\psi(s-t),
\end{equation}
and we shall apply the twisted Selberg trace formula \eqref{e:twisted-selberg} to the following even function
\begin{equation}
\label{e:test}
g(s):=\psi_{t,\xi}(s)+\psi_{t,\xi}(-s)\in C_c^\infty(\mathbb{R}).
\end{equation}
For $t\geq 1$, $g(\ell_\gamma)=e^{i\ell_\gamma\xi}\psi(\ell_\gamma-t)$ and thus the second term in the right-hand side of \eqref{e:twisted-selberg} becomes
\begin{equation}
\label{e:essgap-S}
S(t,\xi):=\sum_{\gamma\in\mathcal{G}(X)}\frac{e^{\int_\gamma\omega+i\ell_\gamma\xi}\ell_\gamma^\#\psi(\ell_\gamma-t)}{2\sinh(\ell_\gamma/2)}
\end{equation}
The main idea is to estimate the average of $|S(t,\xi)|^2$ against a Gaussian weight:
\begin{equation}
\label{e:gaussianaverage}
I(t,\sigma):=\frac{1}{\sqrt{2\pi}\sigma}\int_{\mathbb{R}}|S(t, \xi)|^2 e^{-\xi^2/2\sigma^2} d\xi.
\end{equation}
A direct calculation using
$$\frac{1}{\sqrt{2\pi}\sigma}\int_{\mathbb{R}}e^{i\ell_\gamma\xi-i\ell_{\tilde{\gamma}}\xi}e^{-\xi^2/2\sigma^2}d\xi
=e^{-\frac{1}{2}\sigma^2(\ell_\gamma-\ell_{\tilde{\gamma}})^2}$$
shows that
$$I(t,\sigma)=
\sum_{\gamma,\tilde{\gamma}\in\mathcal{G}(X)}
\frac{\ell_\gamma^\#\ell_{\tilde{\gamma}}^\#e^{-\frac{1}{2}\sigma^2(\ell_\gamma-\ell_{\tilde{\gamma}})^2+\int_{\gamma}\omega+\int_{\tilde{\gamma}}\omega}\psi(\ell_\gamma-t)\psi(\ell_{\tilde{\gamma}}-t)}{4\sinh(\ell_\gamma/2)\sinh(\ell_{\tilde{\gamma}}/2)}.$$
Noticing that all terms are nonnegative, we only keep the diagonal terms $\gamma=\tilde{\gamma}$ and restrict to the terms with $\psi(\ell_\gamma-t)=1$ leading to the following lower bound for \eqref{e:gaussianaverage}:
$$I(t,\sigma)\geq\sum_{|\ell_\gamma-t|\leq\frac{1}{2}}\frac{(\ell_\gamma^\#)^2e^{2\int_\gamma\omega}}{4\sinh^2(\ell_\gamma/2)}.$$
Applying the equilibrium distribution theorem \eqref{e:equilibrium} gives for any $\varepsilon>0$, there exists $c>0$ such that for all $t\geq 1$ and $\sigma>0$,
\begin{equation}
\label{e:gaussianlb}
I(t,\sigma)\geq ce^{(\Press(2\omega)-1-\varepsilon)t}.
\end{equation}

On the other hand, by the Paley--Wiener--Schwarz theorem (see \cite{hormander}), 
$$\widehat{\psi_{t,\xi}}(r)=e^{it(\xi-r)}\hat{\psi}(r-\xi)$$ is an entire function of $r\in\mathbb{C}$ with the estimate
$$\widehat{\psi_{t,\xi}}(r)\leq C_Me^{(t+1)|\Im r|}(1+|\Re r-\xi|)^{-M},$$
for any $M>0$. This gives the estimate for the entire function $\hat{g}(r)$ uniformly in $t\geq 1$ and $\xi\in\mathbb{R}$:
\begin{equation}
\label{e:pws-essgap}
|\hat{g}(r)|\leq C_Me^{(t+1)|\Im r|}((1+|\Re r-\xi|)^{-M}+(1+|\Re r+\xi|)^{-M}).
\end{equation}

Now we argue by contradiction, and assume that $N_A(R)=\mathcal{O}(1)$, i.e. there are only finitely many $r_j$ with $\Im r_j\geq A$. We obtain upper bound for \eqref{e:essgap-S} through the twisted Selberg trace formula \eqref{e:twisted-selberg}:
$$S(t,\xi)=\sum_{j=0}^\infty\hat{g}(r_j)-\frac{\Vol(X)}{4\pi}\int_{-\infty}^\infty r\hat{g}(r)\tanh(\pi r)dr.$$
Therefore 
\begin{equation}
\label{e:essgap-S-ub}
|S(t,\xi)|^2\leq C\left|\sum_{\Im r_j<A}\hat{g}(r_j)\right|^2+C\left|\sum_{\Im r_j\geq A}\hat{g}(r_j)\right|^2+C\left|\int_{-\infty}^\infty r\hat{g}(r)\tanh(\pi r)dr\right|^2.
\end{equation}
We use \eqref{e:pws-essgap} to estimate each term on the right-hand side of \eqref{e:essgap-S-ub}: For $t\geq 1$,
\begin{itemize}
\item In the first term in \eqref{e:essgap-S-ub}, by the Weyl law \eqref{e:weyllaw}, we can take $M=3$ and get
$$\sum_{j=0}^\infty(1+|\Re r_j-\xi|)^{-3}\leq C(1+|\xi|).$$
and thus
\begin{equation}
\label{e:essgap-S1}
\left|\sum_{\Im r_j<A}\hat{g}(r_j)\right|\leq C(1+|\xi|)e^{tA}.
\end{equation}
\item Since for $\Im r_j\geq A$, $\Re r_j$ is bounded, each term in the finite sum in the second term in \eqref{e:essgap-S-ub} can be estimated by
\begin{equation}
\label{e:essgap-S2}
|\hat{g}(r_j)|\leq C_Me^{t(\Press(\omega)-\frac{1}{2})}
(1+|\xi|)^{-M}.
\end{equation}
\item For the last term in \eqref{e:essgap-S-ub}, we simply take $M=3$ in \eqref{e:pws-essgap} to get
\begin{equation}
\label{e:essgap-S3}
\left|\int_{-\infty}^\infty r\hat{g}(r)\tanh(\pi r)dr\right|
\leq C(1+|\xi|).
\end{equation} 
\end{itemize}
Now combining \eqref{e:essgap-S1}, \eqref{e:essgap-S2} and \eqref{e:essgap-S3} with the following estimates on Gaussian average: for any $\sigma\geq 1$,
$$\frac{1}{\sqrt{2\pi}\sigma}\int_\mathbb{R}(1+|\xi|)^2e^{-|\xi|^2/2\sigma^2}d\xi\leq C\sigma^2.$$
and 
$$\frac{1}{\sqrt{2\pi}\sigma}\int_\mathbb{R}(1+|\xi|)^{-2M}e^{-|\xi|^2/2\sigma^2}d\xi\leq C_M\sigma^{-1},$$
we obtain the upper bound for \eqref{e:gaussianaverage}: For $t,\sigma\geq 1$,
\begin{equation}
\label{e:gaussianub}
I(t,\sigma)\leq Ce^{2tA}\sigma^2+C_Me^{t(2\Press(\omega)-1)}\sigma^{-1}.
\end{equation}
Now comparing \eqref{e:gaussianlb} with \eqref{e:gaussianub} and taking $\sigma=e^{bt}$, $b>0$, we have for every $\varepsilon>0$, there exists a constant $C>0$ depending also on $X$, $\omega$, $A$ and $\psi$ such that for any $t\geq 1$ and $b>0$,
$$e^{(\Press(2\omega)-1-\varepsilon)t}
\leq C(e^{2(A+b)t}+e^{(2\Press(\omega)-1-b)t}).$$
Thus we have a contradiction when $t\to+\infty$ if
$$A<\frac{1}{2}(\Press(2\omega)-1)-(2\Press(\omega)-\Press(2\omega))$$
and we choose $\varepsilon>0$ small enough,
$$b=2\Press(\omega)-\Press(2\omega)+2\varepsilon.$$
This finishes the proof of Theorem \ref{t:essgap-lb}.

We can further elaborate the analysis if we do not assume $N_A(R)=\mathcal{O}(1)$ as follows: To estimate the contribution of the second term on the right-hand side of \eqref{e:essgap-S-ub} to $I(t,\sigma)$, we use \eqref{e:pws-essgap} to write 
$$\tilde{S}(t,\xi):=\left|\sum_{\Im r_j\geq A}\hat{g}(r_j)\right|
\leq C_Me^{t(\Press(\omega)-\frac{1}{2})}\int_0^\infty
(1+|R-\xi|)^{-M}+(1+|R+\xi|)^{-M}dN_A(R).$$
Then by the change of variable $\xi=\sigma\eta$ and the Minkowski inequality,
\begin{equation*}
\begin{split}
\frac{1}{\sqrt{2\pi}\sigma}&\;\int_\mathbb{R}|\tilde{S}(t,\xi)|^2e^{-|\xi|^2/2\sigma^2}d\xi=\frac{1}{\sqrt{2\pi}}\int_{\mathbb{R}}|\tilde{S}(t,\sigma\eta)|^2e^{-\eta^2/2}d\eta\\
\leq&\; C_Me^{t(2\Press(\omega)-1)}\left(\int_0^\infty\left(\int_{\mathbb{R}}(1+|R-\sigma\eta|)^{-2M}e^{-\eta^2/2}d\eta\right)^{1/2}dN_A(R)\right)^2.
\end{split}
\end{equation*}
Thus we need estimate for the integral
$$J_M(r,\sigma):=\frac{1}{\sqrt{2\pi}\sigma}\int_{\mathbb{R}}(1+|r-\xi|)^{-M}e^{-\xi^2/2\sigma^2}d\xi
=\frac{1}{\sqrt{2\pi}}\int_{\mathbb{R}}(1+|r-\sigma\eta|)^{-M}e^{-\eta^2/2}d\eta,$$
uniformly in $r>0$ and $\sigma\geq 1$. We divide the integral into two parts and estimate separately:
$$\int_{|\eta-\frac{r}{\sigma}|\leq\frac{r}{2\sigma}}(1+|r-\sigma\eta|)^{-M}e^{-\eta^2/2}d\eta
\leq e^{-r^2/8\sigma^2}\int_\mathbb{R}(1+|r-\sigma\eta|)^{-M}d\eta\leq C_M\sigma^{-1}e^{-r^2/8\sigma^2};$$
$$\int_{|\eta-\frac{r}{\sigma}|\geq\frac{r}{2\sigma}}(1+|r-\sigma\eta|)^{-M}e^{-\eta^2/2}d\eta
\leq 2\int_{r/2\sigma}^\infty(1+\sigma\eta)^{-M}d\eta\leq C_M\sigma^{-1}(1+r)^{1-M}.$$
Therefore we have for $M\geq 2$,
$$J_M(r,\sigma)\leq C_M\sigma^{-1}(e^{-r^2/8\sigma^2}+(1+r)^{1-M}),$$
and thus using 
$$\int_0^\infty J_{2M}(R,\sigma)^{1/2}dN_A(R)
\leq C_M\sigma^{-1/2}\int_0^\infty \left(e^{-R^2/8\sigma^2}+(1+R)^{\frac{1}{2}-M}\right)dN_A(R)$$
The second term in this integral can be bounded by a constant by the Weyl law \eqref{e:weyllaw} if we choose $M\geq3$. For the first term we integrate by parts and change variables $R=\sigma u$,
\begin{equation*}
\begin{split}
    \int_0^\infty e^{-R^2/8\sigma^2}dN_A(R)
=&N_A(0)+\frac{1}{4\sigma^2}\int_0^\infty e^{-R^2/8\sigma^2}N_A(R)RdR\\
=&N_A(0)+\frac{1}{4}\int_0^\infty e^{-u^2/8}N_A(\sigma u)udu.
\end{split}
\end{equation*}
As before, we fix some $a>0$ small chosen later and separate the integral into 
$$N_A(0)+\int_0^{\sigma^a}e^{-u^2/8}N_A(\sigma u)udu
\leq CN_A(\sigma^{1+a});$$
and again by the Weyl law \eqref{e:weyllaw},
$$\int_{\sigma^a}^\infty e^{-u^2/8}N_A(\sigma u)udu
\leq C\sigma^2\int_{\sigma^a}^\infty e^{-u^2/8}u^3du
\leq C\sigma^2e^{-\sigma^{2a}/16}$$
which is again bounded by a constant. Thus
\begin{equation}
\label{e:Stilde}
\frac{1}{\sqrt{2\pi}\sigma}\int_{\mathbb{R}}|\tilde{S}(t,\xi)|^2e^{-|\xi|^2/2\sigma^2}d\xi
\leq C_Me^{t(2\Press(\omega)-1)}\sigma^{-1}(N_A(\sigma^{1+a})^2+1).
\end{equation}
Now we use \eqref{e:Stilde} instead of \eqref{e:essgap-S2} to get the upper bound
\begin{equation}
\label{e:gaussianub+}
I(t,\sigma)\leq Ce^{2tA}\sigma^2+Ce^{t(2\Press(\omega)-1)}\sigma^{-1}(1+N_A(\sigma^{1+a})^2),
\end{equation}
where the constant depends on $X,\omega,A,\psi$ and $a>0$, but not on $t,\sigma\geq 1$. Again, comparing with \eqref{e:gaussianlb} and choosing $\sigma=e^{bt}$ with $b>0$ determined later, we get the lower bound
$$N_A(e^{(1+a)bt})^2
\geq \frac{1}{C}e^{(\Press(2\omega)-2\Press(\omega)+b-\varepsilon)t}
-Ce^{(2A-2\Press(\omega)+1+3b)t}-C.$$
Now for any $\beta\in(0,\frac{1}{2})$, any $a\in(0,\frac{1}{2\beta}-1)$, we take 
$$b=\frac{2\Press(\omega)+\varepsilon-\Press(2\omega)}{1-2\beta(1+a)}>0,$$
to see if 
$$A<\frac{1}{2}(\Press(2\omega)-1-2b-\varepsilon)
=\frac{1}{2}(\Press(2\omega)-1)-\frac{2\Press(\omega)-\Press(2\omega)}{1-2\beta(1+a)}-\left(\frac{1}{2}+\frac{1}{1-2\beta(1+a)}\right)\varepsilon,$$
we get \eqref{e:sublinear} when $R=e^{(1+a)bt}$ is sufficiently large and $\beta\in(0,\frac{1}{2})$. We can now take $a,\varepsilon>0$ arbitrarily small to see \eqref{e:sublinear} holds when 
$$A<\frac{1}{2}(\Press(2\omega)-1)-\frac{2\Press(\omega)-\Press(2\omega)}{1-2\beta},\quad \beta\in\left(0,\frac{1}{2}\right).$$

\subsection{The case of the arithmetic surface}
\label{s:arithmetic}
Now we consider the case of the arithmetic surfaces and give an alternative proof of \eqref{e:essgap-arithmetic}. We recall that for a prime number $p\equiv1\mod{4}$ and $n\in\mathbb{Z}$ not a quadratic residue modulo $p$, the fundamental group $\Gamma=\Gamma(n,p)<\PSL(2,\mathbb{R})$ of an arithmetic surface $X=\mathbb{H}^2/\Gamma$ arising from a quanternion algebra consists of all matrices of the form
$$\begin{pmatrix}
a+b\sqrt{n} & (c+d\sqrt{n})\sqrt{p}\\
(c-d\sqrt{n})\sqrt{p} & a-b\sqrt{n}
\end{pmatrix},\quad a,b,c,d\in\mathbb{Z},a^2-b^2n-c^2p+d^2np=1.$$
The length spectrum of $X$ is given by $\{\log x_m\}_{m=0}^\infty$ where
$$x_m=2m^2-1+2m\sqrt{m^2-1},\quad m\in\mathbb{N}.$$
For $m\in\mathbb{N}$, we define
$$L(m):=\sum_{\gamma\in\mathcal{G}(X),\ell_\gamma=\log x_m}
\frac{\ell_\gamma^\#e^{\int_\gamma\omega}}{2\sinh(\ell_\gamma/2)}=\frac{1}{2\sinh(\frac{1}{2}\log x_m)}\sum_{\gamma\in\mathcal{G}(X),\ell_\gamma=\log x_m}\ell_\gamma^\#e^{\int_\gamma\omega}.$$
Then by the equilibrium distribution theorem, for any $\varepsilon>0$, there exists $c>0$ such that for all $t\geq 1$,
$$\sum_{m\in\mathbb{N}:|\log x_m-t|\leq\frac{1}{2}}
L(m)\geq ce^{(\Press(\omega)-\frac{1}{2}-\varepsilon)t}.$$
On the other hand, 
$$\sum_{m\in\mathbb{N}:|\log x_m-t|\leq\frac{1}{2}}
1\geq Ce^{t/2}.$$
By Cauchy--Schwarz inequality, we obtain for any $\varepsilon>0$, there exists $c>0$ such that
$$\sum_{m\in\mathbb{N}:|\log x_m-t|\leq\frac{1}{2}}
L(m)^2\geq ce^{(2\Press(\omega)-\frac{3}{2}-\varepsilon)t}.$$
Now in the Gaussian average \eqref{e:gaussianaverage}, we keep all the terms with $\ell_\gamma=\ell_{\tilde{\gamma}}\in[t-\frac{1}{2},t+\frac{1}{2}]$ to get a better lower bound:
$$I(t,\sigma)\geq\sum_{m\in\mathbb{N}:|\log x_m-t|\leq\frac{1}{2}}\sum_{\ell_\gamma=\ell_{\tilde{\gamma}}=\log x_m}\frac{\ell_\gamma^\#\ell_{\tilde{\gamma}}^\#e^{\int_\gamma\omega}e^{\int_{\tilde{\gamma}}\omega}}{4\sinh^2(\frac{1}{2}\log x_m)}=\sum_{m\in\mathbb{N}:|\log x_m-t|\leq\frac{1}{2}}L(m)^2.$$
Therefore for any $t,\sigma\geq 1$
\begin{equation}
\label{e:gaussianlb-arithmetic}
I(t,\sigma)\geq ce^{(2\Press(\omega)-\frac{3}{2}-\varepsilon)t}.
\end{equation}
Use \eqref{e:gaussianlb-arithmetic} instead of \eqref{e:gaussianlb} in the argument in Section \ref{s:essgap} with $\sigma=e^{t/2}$ we recover Anantharaman's lower bound \eqref{e:essgap-arithmetic}. 
\begin{rem}
The method in Section \ref{s:sublinear} can also be applied here using the fact that there is at least one $L(m)$ with $|\log x_m-t|<\frac{1}{2}$ and $L(m)\geq ce^{(\Press(\omega)-1-\varepsilon)t}$ replacing the weaker estimate \eqref{e:sublinear-right2}. But this seems only giving a worse lower bound $G_\omega\geq\Press(\omega)-\frac{3}{2}$.
\end{rem}
  
\section{Stable norms and the failure of quantum unique ergodicity}
\label{s:nonque}
  
\subsection{Positive essential spectral gap implies non-QUE}
\label{s:gaptoque}
Now we consider the semiclassical defect measures associated to the eigenfunctions $\phi_j$ of $\Delta_\omega$. We use the notation in \cite[Appendix E.3]{resbook} and take the semiclassical parameter $h_j=|\Re r_j |^{-1}\to0+$. In thiw way, we can rewrite \eqref{e:eigenvalues} as
$$P(h_j)\phi_j=0$$
where
$$P(h_j)=-h_j^2\Delta+2h\langle\omega,hd\bullet\rangle
-h_j^2\left(|\omega|^2+\frac{1}{4}+r_j^2\right).$$
Therefore $P\in\Psi^2_h(X)$ satisfies the following conditions
\begin{itemize}
\item $\sigma_h(P)=p(x,\xi):=|\xi|_x^2-1$;
\item $\Im P=\frac{1}{2i}(P-P^\ast)\in h\Psi^1_h(X)$ and
$$\sigma_h(h^{-1}\Im P)=2(\omega(x,\xi)-\Im r_j).$$
\end{itemize}
Now by \cite[Theorem E.43,E.44]{resbook}, we see if we have a subsequence $\phi_{j_k}$ with a semiclassical defect measure $\mu$ and $\Im r_{j_k}\to\alpha\in[0,\infty)$, then 
\begin{itemize}
\item $\supp \mu\subset S^\ast X=\{(x,\xi)\in T^\ast X: |\xi|_x=1\}$;
\item for any $a\in C_c^\infty(T^\ast X)$,
\begin{equation}
\label{e:defect-invariance}
\int_{T^\ast X}(H_pa+2ba)d\mu=0,\quad b=2(\omega(x,\xi)-\alpha)
\end{equation}
Here $H_p$ is the Hamiltonian vector field of $p$ on $T^\ast M$, which is tangent to $S^\ast X$ and equals to twice the generator of the geodesic flow $\varphi^t$ when restricted to $S^\ast X$.
\end{itemize}
In particular, if we choose $a\in C_c^\infty(T^\ast X)$ which equals 1 near $S^\ast X$, so that $H_pa|_{S^\ast X}=0$, then \eqref{e:defect-invariance} shows that
$$\int_{S^\ast X}\omega(x,\xi)d\mu=\alpha\mu(S^\ast X).$$
Now if $G_\omega>0$, we can find one subsequence of $r_j$ with $\alpha_1=0$ and thus 
$$\int_{S^\ast X}\omega(x,\xi)d\mu_1=0$$
by the concentration of eigenvalues \eqref{e:concentration},  and another subsequence with $\alpha_2=G_\omega$ and
$$\int_{S^\ast X}\omega(x,\xi)d\mu_2=G_\omega\mu_2(S^\ast X).$$
Note that both $\mu_1$ and $\mu_2$ are probability measures on $S^\ast X$, they cannot be equal. This finishes the proof of the following proposition:
\begin{prop}
If $G_\omega>0$, then quantum unique ergodicity fails for $\{\phi_j\}_{j=0}^\infty$.
\end{prop}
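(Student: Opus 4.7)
The plan is to use semiclassical defect measures attached to the eigenfunctions $\phi_j$ and exploit the non-self-adjoint subprincipal symbol induced by $\omega$ to produce two distinct invariant probability measures on $S^\ast X$, neither of which can be Liouville measure.

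First I would recast the eigenvalue equation $\Delta_\omega\phi_j+\lambda_j\phi_j=0$ semiclassically by setting $h_j=|\Re r_j|^{-1}\to 0+$ along any subsequence with $\Re r_j\to\infty$, and writing $P(h_j)\phi_j=0$ where $P(h_j)\in\Psi^2_h(X)$ has principal symbol $p(x,\xi)=|\xi|_x^2-1$ and satisfies $\sigma_h(h^{-1}\Im P)=2(\omega(x,\xi)-\Im r_j)$. Standard results on defect measures for semiclassical operators with small non-self-adjoint perturbation (as in \cite[Theorem E.43, E.44]{resbook}) then guarantee that any weak-$\ast$ subsequential limit $\mu$ of the Wigner distributions of $\phi_{j_k}$, along a subsequence with $\Im r_{j_k}\to\alpha$, is a Radon measure on $T^\ast X$ supported on $S^\ast X$ and satisfies the transport-type identity
\begin{equation*}
\int_{T^\ast X}(H_pa+2ba)\,d\mu=0,\qquad b=2(\omega(x,\xi)-\alpha),
\end{equation*}
for every $a\in C_c^\infty(T^\ast X)$.

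Next, I would extract the key scalar consequence by taking a cut-off $a\in C_c^\infty(T^\ast X)$ that equals $1$ on a neighborhood of $S^\ast X$. Then $H_pa$ vanishes on $S^\ast X$, the first term in the displayed identity drops out, and since $\mu$ is supported on $S^\ast X$ I obtain
\begin{equation*}
\int_{S^\ast X}\omega(x,\xi)\,d\mu=\alpha\,\mu(S^\ast X).
\end{equation*}
This is the crucial constraint: the center of mass of $\omega$ under $\mu$ is forced to equal the imaginary part of the spectral parameter along the chosen subsequence.

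To conclude, I would use the hypothesis $G_\omega>0$ to produce two different values of $\alpha$ supporting two different defect measures. By the spectral concentration \eqref{e:concentration}, for any fixed $A>0$ almost all $r_j$ satisfy $\Im r_j<A$, so I can choose a subsequence with $\Im r_{j_k}\to 0$ yielding a probability measure $\mu_1$ on $S^\ast X$ with $\int\omega\,d\mu_1=0$. By the definition of $G_\omega$ as a limsup, I can also select a subsequence with $\Im r_{j_k}\to G_\omega>0$, yielding a probability measure $\mu_2$ on $S^\ast X$ with $\int\omega\,d\mu_2=G_\omega>0$. Since the two integrals differ, $\mu_1\neq\mu_2$, so at most one of them can equal the Liouville measure $\mu_L$, and QUE for the sequence $\{\phi_j\}$ fails. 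The main conceptual point — and the only subtle step — is justifying that the defect measure really is a probability measure (no mass escapes to infinity in $\xi$), which follows from microlocal ellipticity of $P$ off $S^\ast X$; the rest is a direct application of the invariance identity.
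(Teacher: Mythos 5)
Your proposal is correct and follows essentially the same route as the paper: the same semiclassical reformulation $P(h_j)\phi_j=0$, the same invariance identity for defect measures from \cite[Theorems E.43, E.44]{resbook}, the same test-function trick yielding $\int_{S^\ast X}\omega\,d\mu=\alpha\,\mu(S^\ast X)$, and the same choice of two subsequences with $\Im r_{j_k}\to 0$ (via \eqref{e:concentration}) and $\Im r_{j_k}\to G_\omega$ to produce distinct limit measures. Your extra remark on ruling out escape of mass by ellipticity off $S^\ast X$ is a fine (and implicit in the cited results) justification that the limits are probability measures on $S^\ast X$.
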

We remark that currently we do not have the quantum ergodicity theorem for the twisted Laplacian yet. It is unclear to us what is the correct candidate for the semiclassical measure $\mu$ of a density one subsequence of $\phi_j$ which must have $\Im r_j\to0$ and thus by \eqref{e:defect-invariance},
$$\frac{d}{dt}\varphi_t^\ast\mu=-2\omega\mu.$$

\subsection{Discussion on the pressure, the stable norm and the essential spectral gap}
\label{s:discussion}
In this subsection, we discuss the essential spectral gap $G_\omega$ and its relation to the pressure and the stable norm. 

First, if there is some $\gamma\in\mathcal{G}(X)$ such that
\begin{equation}
\label{e:essgap-sufficient}
\int_{\gamma}\omega>\frac{3}{2}\ell_\gamma
\end{equation}
then by definition \eqref{e:stablenorm}, $\|\omega\|_s>\frac{3}{2}$ and thus by \eqref{ineqpress},
$$2\|\omega\|_s-\Press(\omega)-\frac{1}{2}
>\|\omega\|_s+1-\Press(\omega)\geq0.$$
This proves Theorem \ref{t:nonque} from \eqref{e:essgap-lbweak}. 

Now we discuss the different lower bounds \eqref{e:essgap-lbweak}, \eqref{e:essgap-lb} and the arithmetic case \eqref{e:essgap-arithmetic} for $G_\omega$. In general, it is not clear which one is better. Let us consider the situation of a family of harmonic 1-forms $\{t\omega\}_{t>0}$ for some fixed $\omega\in\mathcal{H}^1(X;\mathbb{R})$.
When $t\to+\infty$, we have the following relation:
\begin{prop}
\label{p:limitpress}
For any non-zero $\omega \in \mathcal{H}^{1}(X,\mathbb{R})$,
\begin{equation}
\label{e:limitpress}
\lim_{t \to \infty}\mathrm{Pr}(t\omega)-t\|\omega\|_{s}=0.
\end{equation}
\end{prop}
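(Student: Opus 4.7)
The plan is to prove the statement by a Legendre--duality reduction to showing that every $\omega$-maximizing invariant measure has zero Kolmogorov--Sinai entropy. I begin from the elementary sandwich $t\|\omega\|_s\leq \Press(t\omega)\leq 1+t\|\omega\|_s$ for $t\geq 0$: the lower bound follows from \eqref{ineqpress} applied to $t\omega$ (using $\|t\omega\|_s=t\|\omega\|_s$), and the upper bound from the variational formula \eqref{e:pressure} together with $h_\KS(\mu)\leq 1$ and $\int\omega\,d\mu\leq\|\omega\|_s$ for every $\mu\in\mathcal{M}$. Thus $g(t):=\Press(t\omega)-t\|\omega\|_s\in[0,1]$, which is the crude bracket that must be refined to a limit of $0$.

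To upgrade this to convergence, I would use the differentiability of the pressure. Since $\Press(t\omega)$ is convex in $t$ and, by uniqueness of the equilibrium state for H\"older potentials on the Anosov geodesic flow, has derivative $\int\omega\,d\mu_t$ where $\mu_t$ is the equilibrium measure of $t\omega$, we get $g'(t)=\int\omega\,d\mu_t-\|\omega\|_s\leq 0$. So $g$ is non-increasing and converges to some $L\in[0,1]$. To identify $L$, I rewrite
$$g(t) \;=\; \sup_{\mu\in\mathcal{M}}\bigl\{h_\KS(\mu)-t\bigl(\|\omega\|_s-\textstyle\int\omega\,d\mu\bigr)\bigr\}$$
and split the supremum according to whether $\|\omega\|_s-\int\omega\,d\mu$ exceeds a given $\delta>0$ or not. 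The ``gap $\geq\delta$'' piece is bounded by $1-t\delta\to-\infty$ as $t\to\infty$, while the ``gap $<\delta$'' piece is bounded by $\sup_{K_\delta}h_\KS$, with $K_\delta:=\{\mu\in\mathcal{M}:\int\omega\,d\mu\geq\|\omega\|_s-\delta\}$ closed in the weak-$*$ topology. Weak-$*$ compactness of $\mathcal{M}$ together with upper semicontinuity of $h_\KS$ for the Anosov geodesic flow gives that these suprema are attained and decrease as $\delta\downarrow 0$ to $\sup_{K_0}h_\KS$, where $K_0:=\{\mu\in\mathcal{M}:\int\omega\,d\mu=\|\omega\|_s\}$ is the set of $\omega$-maximizing measures. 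Combining the two estimates yields $L=\sup_{K_0}h_\KS$.

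The main obstacle is then the final rigidity step: showing $\sup_{K_0}h_\KS=0$, i.e.\ that every $\omega$-maximizing invariant probability measure on $S^*X$ has zero Kolmogorov--Sinai entropy. By ergodic decomposition, using that $K_0$ is a face of the simplex $\mathcal{M}$ so ergodic components of a maximizer are again maximizers, the task reduces to ergodic $\mu\in K_0$. This is a Ma\~n\'e-type rigidity statement transposed to the Anosov setting: since $\omega$ represents a nonzero class in $\mathcal{H}^1(X;\mathbb{R})$ it is not cohomologous to a constant, so Ratner's strict convexity of $s\mapsto\Press(s\omega)$ (cited in Section \ref{s:dynamic}) rules out flat pieces of the pressure graph; combined with uniqueness of equilibrium states and the specification property of the geodesic flow, one expects any weak-$*$ limit of $\mu_t$ as $t\to\infty$ to be supported on periodic orbits realizing the stable norm, and any such measure has entropy zero. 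Making this rigidity precise in the hyperbolic surface setting, and in particular ruling out positive-entropy maximizing measures for harmonic $1$-forms, is the delicate point where I expect the real work of the proof to lie.
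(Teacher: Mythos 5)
Your reduction is correct and matches the spirit of the paper's argument, but the step you flag as ``the delicate point'' is precisely where the proof lives, and the paper does not need to prove it from scratch: it cites known results. Specifically, the paper introduces the entropy function
$$H(\alpha):=\sup\bigl\{h_\KS(\mu):\mu\in\mathcal{M},\ \textstyle\int_{S^\ast X}\omega\,d\mu=\alpha\bigr\},\quad \alpha\in[-\|\omega\|_s,\|\omega\|_s],$$
then quotes Babillot--Ledrappier \cite{BL} for continuity and strict concavity of $H$ with maximum at $\alpha=0$, and Anantharaman \cite{naliniearly} for the crucial endpoint vanishing $H(\pm\|\omega\|_s)=0$, which is exactly your assertion $\sup_{K_0}h_\KS=0$. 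The paper's argument is then very short: from $t\|\omega\|_s\leq\Press(t\omega)\leq t\|\omega\|_s+1$ one gets $\int\omega\,d\mu_t\geq\|\omega\|_s-1/t$ for the equilibrium measure $\mu_t$ of $t\omega$; since $H$ is strictly decreasing on $[0,\|\omega\|_s]$ this forces $h_\KS(\mu_t)\leq H(\|\omega\|_s-1/t)$, and continuity of $H$ at the endpoint gives $h_\KS(\mu_t)\to 0$, hence $\Press(t\omega)-t\|\omega\|_s=h_\KS(\mu_t)+\bigl(\int t\omega\,d\mu_t-t\|\omega\|_s\bigr)$ is squeezed to $0$. (Note the last parenthesis is $\leq 0$, and the whole expression is $\geq 0$, so it suffices to control $h_\KS(\mu_t)$.)

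Two remarks on your write-up. First, the $\delta$-splitting of the Legendre supremum is a valid alternative to the paper's direct estimate via $\mu_t$; it requires upper semicontinuity of $h_\KS$ (true for the geodesic flow, which is expansive) plus weak-$\ast$ compactness of $\mathcal M$ to pass $\sup_{K_\delta}h_\KS\downarrow\sup_{K_0}h_\KS$, so that part is fine, though it buys nothing over the paper's shorter route. Second, and more importantly, your final paragraph does not constitute a proof: strict convexity of $s\mapsto\Press(s\omega)$ (Ratner) rules out $\Press(t\omega)$ being affine but says nothing directly about the entropy of maximizing measures, and ``one expects any weak-$\ast$ limit of $\mu_t$ to be supported on periodic orbits'' is not established by the specification property alone. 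The vanishing $H(\|\omega\|_s)=0$ is a genuine theorem of Anantharaman \cite{naliniearly}, whose proof exploits that $\omega$ is a closed $1$-form so that $\int_\gamma\omega$ is a homological quantity and the $\omega$-maximizing measures sit on the boundary of the unit ball of the stable norm; it is not a generic ergodic-optimization fact for arbitrary H\"older potentials. You should replace your heuristic discussion with a citation to that theorem, after which the rest of your argument closes.
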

\begin{proof}
We define for $\omega\in\mathcal{H}^1(X;\mathbb{R})$ and $\alpha\in[-\|\omega\|_s,\|\omega\|_s]$, 
$$H(\alpha;\omega):=\sup_{\mu \in \mathcal{M}}\left\{h_{\KS}(\mu)
\left|\int_{S^\ast X}\omega d\mu =\alpha\right.\right\}.$$
Now we fix a non-zero $\omega \in \mathcal{H}^{1}(X,\mathbb{R})$ and write $H(\alpha)=H(\alpha;\omega)$. Babillot--Ledrappier \cite{BL} showed that $H(\alpha)$ is continuous and strictly concave for $\alpha\in[-\|\omega\|_{s},\|\omega\|_{s}]$, and achieves the maximal value at $\alpha=0$. Furthermore, Anantharaman \cite{naliniearly} proved that 
$$H(\|\omega\|_{s})=H(-\|\omega\|_{s})=0.$$
We denote $\mu_t$ to be the equilibrium measure of $t\omega$, then
$$t\|\omega\|_{s}\leq \Press(t\omega)=h_{\KS}(\mu_{t})+\int_{S^\ast X}t\omega d\mu_{t}\leq t\|\omega\|_{s}+1.$$
Therefore
$$\int_{S^\ast X}\omega d\mu_t\geq\|\omega\|_s-\frac{1}{t}h_{\KS}(\mu_t)\geq\|\omega\|_s-\frac{1}{t}.$$
For $t>\|\omega\|_s^{-1}$, since $H$ is strictly decreasing on $[0,\|\omega\|_s]$, we have
$$h_{\KS}(\mu_{t})\leq H\left(\int_{S^\ast X}\omega d\mu_{t}\right)< H\left(\|\omega\|_{s}-\frac{1}{t}\right).$$
Now by the continuity of $H$, 
$$\limsup_{t\to \infty}h_{\KS}(\mu_{t})\leq H(\|\omega\|_{s})=0.$$
Thus we have 
$$\lim_{t \to \infty}\mathrm{Pr}(t\omega)-t\|\omega\|_{s}=\lim_{t \to \infty}h_{\KS}(\mu_{t})=0.$$
\end{proof}

Proposition \ref{p:limitpress} shows that
\begin{itemize}
\item As $t\to+\infty$, \eqref{e:essgap-lbweak} and \eqref{e:essgap-lb} agrees:
$$\left[2\|\omega\|_s-\Press(\omega)-\frac{1}{2}\right]
-\left[\frac{3}{2}\Press(2\omega)-2\Press(\omega)-\frac{1}{2}\right]=2\|\omega\|_s+\Press(\omega)-\frac{3}{2}\Press(2\omega),$$
and by \eqref{e:limitpress}
$$\lim_{t\to+\infty}2t\|\omega\|_s+\Press(t\omega)-\frac{3}{2}\Press(2t\omega)=0.$$
Similarly, they all agree to $\Im r_0:=\Press(t\omega)-\frac{1}{2}$ when $t\to+\infty$. However, it is not clear to us which one of \eqref{e:essgap-lbweak} and \eqref{e:essgap-lb} is better even if $t$ is large enough.
\item In the arithmetic case, when $t$ is large enough, both \eqref{e:essgap-lbweak} and \eqref{e:essgap-lb} are better than \eqref{e:essgap-arithmetic}: For example,
$$\left[2\|\omega\|_s-\Press(\omega)-\frac{1}{2}\right]-\left[\Press(\omega)-\frac{5}{4}\right]=\frac{3}{4}-2(\Press(\omega)-\|\omega\|_s).$$
By \eqref{e:limitpress},
$$\lim_{t\to+\infty}\frac{3}{4}-2(\Press(t\omega)-t\|\omega\|_s)=\frac{3}{4}.$$
\item On the other hand, to make \eqref{e:essgap-arithmetic} non-trivial, we only need that for some $\gamma\in\mathcal{P}(X)$, 
$$\int_\gamma\omega>\frac{5}{4}\ell_\gamma,$$
so that $\Press(\omega)\geq\|\omega\|_s>\frac{5}{4}$. This is better than \eqref{e:essgap-sufficient}.
\end{itemize}

We make the following conjecture motivated by the Jakobson--Naud conjecture \cite{jakobsonnaud} on resonances for convex co-compact hyperbolic surfaces:
\begin{conj}
For any $\omega\in\mathcal{H}^1(X)$ on a compact hyperbolic surfaces $X$,
\begin{equation}
\label{e:essgap-conj}
G_\omega\geq\frac{1}{2}(\Press(2\omega)-1).
\end{equation}
In particular, $G_\omega=0$ if and only if $\omega=0$. Thus for any non-zero $\omega\in\mathcal{H}^1(X;\mathbb{R})$ we have the failure of asymptotic version of Riemann hypothesis for $Z_\omega$ and the failure of quantum unique ergodicity for $\phi_j$.
\end{conj}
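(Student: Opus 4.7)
The conjectured bound $G_\omega\geq\frac{1}{2}(\Press(2\omega)-1)$ is modeled on Jakobson--Naud's still-open conjecture for convex co-compact surfaces, so I do not expect a short proof; I would attempt to refine the Gaussian-averaging machinery of Section \ref{s:essgap}. The diagonal lower bound \eqref{e:gaussianlb}, $I(t,\sigma)\geq c\,e^{(\Press(2\omega)-1-\varepsilon)t}$, is already sharp for this method, so the entire defect $2\Press(\omega)-\Press(2\omega)>0$ between Theorem \ref{t:essgap-lb} and the conjecture must be recovered from the second term $e^{t(2\Press(\omega)-1)}\sigma^{-1}$ of the spectral upper bound \eqref{e:gaussianub+}. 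The concrete target is thus to replace the exponent $2\Press(\omega)-1$ there by $\Press(2\omega)-1$.

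A first attempt would be a bootstrap: since \eqref{e:gaussianub+} explicitly contains the factor $N_A(\sigma^{1+a})^2$, any polynomial bound $N_A(\sigma^{1+a})=O(\sigma^{\beta'})$ with $\beta'<\frac{1}{2}$ shrinks the defect, and feeding the sublinear results of Section \ref{s:sublinear} back into the argument of Section \ref{s:essgap} and iterating would, one hopes, close the gap, with the fixed point $A_\infty=\frac{1}{2}(\Press(2\omega)-1)$. A more structural route is to attack $Z_\omega(s)$ directly: prove Dolgopyat-type oscillatory estimates for the weighted sums $\sum_{|\ell_\gamma-t|\leq\frac{1}{2}}e^{\int_\gamma\omega+i\xi\ell_\gamma}$ delivering effective cancellation as $|\xi|\to\infty$, and translate these via the Euler product \eqref{e:twisted-zeta} into zero-free strips for $Z_\omega$, and hence into the desired lower bound on $N_A(R)$.

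The main obstacle, common to both routes, is that the $\xi=0$ geometric sum $S(t,0)$ is already of size $e^{t(\Press(\omega)-\frac{1}{2})}$ by the equilibrium distribution \eqref{e:equilibrium}, so the naive upper bound $|S(t,\xi)|^2\lesssim e^{t(2\Press(\omega)-1)}$ is genuinely attained before any averaging in $\xi$; the required gain must come entirely from cancellations in $\xi$ on the spectral side of the trace formula. This is precisely the obstruction Jakobson--Naud meet for convex co-compact surfaces, and I expect that overcoming it in the twisted setting would require either a quantitative spectral gap for the transfer operator of the geodesic flow twisted by the cocycle $e^{\int\omega}$, in the spirit of Dolgopyat--Naud, or a genuinely new dynamical input.

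Granting the main inequality \eqref{e:essgap-conj}, the remaining clauses are immediate. Strict convexity of $\Press$ along $\{t\omega\}$ (which holds because any nonzero harmonic 1-form is not cohomologous to a constant, as recalled in Section \ref{s:dynamic}) together with $\Press(0)=1$ and $\Press(\omega)\geq 1$ from \eqref{ineqpress} gives $\Press(2\omega)>2\Press(\omega)-1\geq 1$ for $\omega\neq 0$, so \eqref{e:essgap-conj} yields $G_\omega>0$; the failure of the asymptotic Riemann hypothesis for $Z_\omega$ then follows from the correspondence between spectral zeros of $Z_\omega$ and $\Sp(\omega)$ recorded after \eqref{e:twisted-zeta}, and the failure of quantum unique ergodicity from the proposition of Section \ref{s:gaptoque}.
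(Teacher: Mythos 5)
This statement is a conjecture, and the paper provides no proof of the main inequality \eqref{e:essgap-conj}; you correctly recognize this and frame your answer as a programme rather than a proof. Your speculative routes (bootstrapping the sublinear bounds of Section \ref{s:sublinear} through \eqref{e:gaussianub+}, or a Dolgopyat-type oscillatory estimate for the twisted geodesic sums) are sensible and consistent with the paper's own remarks, which point to Dolgopyat's method and fractal uncertainty as the likely missing ingredients, though there the context is upper bounds on $G_\omega$. Since there is no ground-truth proof to compare against, I will only assess the part that \emph{can} be verified: the deduction of the ``in particular'' clauses from \eqref{e:essgap-conj}. Your argument there is correct and matches what the paper implicitly relies on: strict convexity of $t\mapsto\Press(t\omega)$ for $\omega\neq 0$ (stated in Section \ref{s:dynamic}, citing Ratner) together with $\Press(0)=1$ gives
\[
\Press(\omega)=\Press\!\left(\tfrac{1}{2}\cdot 0+\tfrac{1}{2}\cdot 2\omega\right)<\tfrac{1}{2}\bigl(1+\Press(2\omega)\bigr),
\]
hence $\Press(2\omega)>2\Press(\omega)-1\geq 1$ by \eqref{ineqpress}, so \eqref{e:essgap-conj} forces $G_\omega>0$; the converse $G_0=0$ is immediate since the untwisted spectrum is real. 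The failure of the asymptotic Riemann hypothesis for $Z_\omega$ then follows from the zero--spectrum correspondence after \eqref{e:twisted-zeta}, and the failure of QUE from the proposition in Section \ref{s:gaptoque}, exactly as you say.

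One caution worth flagging: the bootstrap you sketch is unlikely to converge to the fixed point $A_\infty=\frac{1}{2}(\Press(2\omega)-1)$ as stated. The factor $N_A(\sigma^{1+a})^2$ in \eqref{e:gaussianub+} is a \emph{lower-order} correction to the $e^{t(2\Press(\omega)-1)}\sigma^{-1}$ term only when $N_A$ grows slower than $\sigma^{1/2}$; feeding in a sublinear bound of exponent $\beta'<\tfrac12$ merely recovers Theorem \ref{t:essgap-lb}'s width and does not erase the $2\Press(\omega)-\Press(2\omega)$ defect, which lives in the $\sigma^{-1}$ prefactor and not in $N_A$. As you yourself note in your third paragraph, the real obstruction is that $|S(t,0)|^2$ genuinely attains $e^{t(2\Press(\omega)-1)}$, so the needed gain must come from cancellation in $\xi$ on the spectral side, which the present averaging argument does not exploit. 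Your second (Dolgopyat-type) route is therefore the more plausible one, and your identification of the key missing input --- a spectral gap for the transfer operator twisted by $e^{\int\omega}$ --- is the right diagnosis.
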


Finally we briefly discuss the upper bound on the essential spectral gap:
\begin{itemize}
\item A trivial upper bound is given by 
$$G_\omega\leq \Im r_0=\Press(\omega)-\frac{1}{2}.$$
It is not known how to improve this bound, one probably need Dolgopyat's method as in the work of Naud \cite{naud} and Dyatlov--Jin \cite{regfup} on resonances for convex co-compact hyperbolic surfaces.
\item From Lebeau \cite{lebeau}, we can deduce that
$$G_\omega\leq\|\omega\|_s.$$
A possible improvement of this upper bound may come from the fractal uncertainty principle of Bourgain--Dyatlov \cite{fullgap} as in Jin \cite{dwefup} for damped wave equation. We leave these questions for future papers to explore.
\end{itemize}


  
\def\arXiv#1{\href{http://arxiv.org/abs/#1}{arXiv:#1}}

\end{document}